\def\thm@space@setup{
 \thm@preskip=\parskip \thm@postskip=0pt
}
\def\th@remark{
  \thm@headfont{\itshape}
  \normalfont
  \thm@preskip\parskip \thm@postskip=0pt
}
\numberwithin{equation}{section}
\DeclareMathOperator{\id}{id} \DeclareMathOperator{\ext}{\mathrm{ext}}  
\DeclareMathOperator{\fin}{\mathrm{fin}} \DeclareMathOperator{\Pol}{\mathrm{Pol}} \DeclareMathOperator{\End}{\mathrm{End}}
 \newcommand{\g}{\mathfrak{g}} \newcommand{\h}{\mathfrak{h}} \newcommand{\kk}{\mathfrak{k}}  
\newcommand{\n}{\mathfrak{n}}   \newcommand{\bb}{\mathfrak{b}}
 \newcommand{\ssl}{\mathfrak{sl}}   \newcommand{\G}{\mathbb{G}}
  \newcommand{\C}{\mathbb{C}} \newcommand{\R}{\mathbb{R}} \newcommand{\Z}{\mathbb{Z}} \newcommand{\N}{\mathbb{Z}_+}
   \newcommand{\T}{\mathbb{T}}  
 \newcommand{\Hsp}{\mathscr{H}}  
\newcommand{\bqn}[3]{\left\lbrack \begin{array}{c} \!#1\! \\ \!#2\! \end{array}\right\rbrack_{#3}} 
\newcommand{\QQ}{\mathcal{Q}}
\newcommand{\RR}{\mathcal{R}}
     \newcommand{\wt}{\mathrm{wt}}
 \newcommand{\du}[1]{#1^{\vee}}  \newcommand{\K}{\mathbb{K}} 
\newcommand{\U}{\mathcal{U}}
\newcommand{\CU}{\check{U}}
\newcommand{\SU}{\mathrm{SU}}
\newcommand{\SSS}{\mathcal{S}}
\newtheorem{Theorem}{Theorem}[section] \newtheorem{Lem}[Theorem]{Lemma} \newtheorem{Prop}[Theorem]{Proposition} \newtheorem{Cor}[Theorem]{Corollary}
\theoremstyle{definition} \newtheorem{Def}[Theorem]{Definition} \newtheorem{Rem}[Theorem]{Remark}
\newcommand\bp{\begin{proof}}
\newcommand\ep{\end{proof}}
\begin{document}

\title{Quantum flag manifolds as quotients of degenerate quantized universal enveloping algebras}

\author{Kenny De Commer\thanks{This work is part of the project supported by the NCN-grant 2012/06/M/ST1/00169. Department of mathematics, Vrije Universiteit Brussel, VUB, B-1050 Brussels, Belgium, email: {\tt Kenny.De.Commer@vub.ac.be}}
\and Sergey Neshveyev\thanks{Research supported by ERC Grant no.~307663. Department of Mathematics, University of Oslo, P.O. Box 1053 Blindern, NO-0316 Oslo, Norway, email: {\tt sergeyn@math.uio.no}}}

\date{}

\maketitle

\begin{abstract} \noindent Let $\mathfrak{g}$ be a semi-simple Lie algebra with fixed root system, and $U_q(\mathfrak{g})$ the quantization of its universal enveloping algebra. Let $\mathcal{S}$ be a subset of the simple roots of~$\g$. We show that the defining relations for $U_q(\mathfrak{g})$ can be slightly modified in such a way that the resulting algebra $U_q(\g;\SSS)$ allows a homomorphism onto (an extension of) the algebra $\mathrm{Pol}(\mathbb{G}_q/\K_{\SSS,q})$ of functions on the quantum flag manifold~$\G_q/\K_{\SSS,q}$ corresponding to $\SSS$. Moreover, this homomorphism is equivariant with respect to a natural adjoint action of $U_q(\g)$ on $U_q(\g;\SSS)$ and the standard action of $U_q(\g)$ on $\mathrm{Pol}(\mathbb{G}_q/\K_{\SSS,q})$.
\end{abstract}

\emph{Keywords}: Quantum universal enveloping algebras, quantum flag manifolds, compact quantum groups

AMS 2010 \emph{Mathematics subject classification}: 17B37; 20G42; 81R50

\section*{Introduction}

Let $\G$ be a semi-simple simply connected compact Lie group with Lie algebra $\g$. As was observed by Drinfel'd \cite{Dri1}, see also \cite{Gav1}, the Drinfel'd-Jimbo quantization $U_q(\g)$ of the universal enveloping algebra $U(\g)$ can also be seen as a quantization of the function algebra on the Poisson-Lie dual $\G^*$ of $\G$. In this way, $U_q(\g)$ and its collection of irreducible representations can be interpreted as a quantization of $\G^*$ and its collection of symplectic leaves, an interpretation connected with Kirillov's orbit method. In a similar spirit, a flag manifold $\G/\K$, being identified with some coadjoint orbit in $\g^*$, is sometimes viewed as a limit of the irreducible quotients of $U(\g)$ when the dimension tends to infinity, an idea made precise in \cite{Lan1,Rie1}.

In \cite{EEM1}, see also \cite{Mud1}, flag manifolds were considered with a Poisson structure obtained from the associated dynamical $r$-matrix together with a character on the Lie algebra of the stabilizer. In case this data satisfied a certain regularity condition, it was shown (in the formal deformation setting) that the quantization of this flag manifold could be constructed as a quotient by the kernel of a representation on a suitable generalized Verma module.

We contribute to this circle of ideas by showing that the relations of $U_q(\g)$ can be modified as to produce an algebra $U_q(\g;\SSS)$ which projects onto (an extension of) the algebra $\Pol(\G_q/\K_{\SSS,q})$ of functions on the quantum flag manifold $\G_q/\K_{\SSS,q}$ associated with a set $\SSS$ of simple roots. The algebras $U_q(\g;\SSS)$ are particular examples of algebras studied in~\cite{DeC1}. They can be obtained by an appropriate rescaling of the generators of $U_q(\g)$ and sending the parameters to $0$. The key observation is that this rescaling is invariant with respect to the natural adjoint action of $U_q(\g)$ on itself. This endows the limit algebra $U_q(\g;\SSS)$ with a natural action of $U_q(\g)$. It then suffices to define our homomorphism on the Cartan subalgebra of the limit algebra, and to use equivariance to extend it to the whole algebra.

A part of our main theorem could be easily deduced from known results. Namely, in the full flag manifold case a closely related isomorphism of $U_q(\bb^\pm)$ onto (a localization of) a subalgebra of $\Pol(\G_q)$ was constructed by De Concini and Procesi in~\cite[Section~2]{DCP} (and a similar, but different, isomorphism earlier in~\cite[Theorem~4.6]{DCL}), see also~\cite[Theorem~3.7]{Ya} and~\cite[Theorem 17]{KOY1} for recent different proofs. Our proof is, however, independent of these results, and in the full flag manifold case it is quite short anyways.

As we do not work in the formal deformation setting, and as our methods are more direct, we make no precise connection with the work of \cite{EEM1}. The extension of our work to the case of quantum homogeneous spaces coming from non-standard Poisson structures would certainly be interesting, especially in connection with real structures~\cite{DeC1}, but will be left for a future occasion.

The article consists of two sections. In the \emph{first section}, we recall the structure and representation theory of the quantized universal enveloping algebras $U_q(\g)$ and their duals, the quantum function algebras $\Pol(\G_q)$. In the \emph{second section}, we then derive our main result. We construct particular elements in the algebra $\Pol(\G_q/\K_{\SSS,q})$, and show that they display a $U_q(\g)$-like behavior. Upon passing to a slightly larger version of $\Pol(\G_q/\K_{\SSS,q})$, we show that the latter algebra can indeed be realized as a quotient of a degenerate version of~$U_q(\g)$. 

\emph{Acknowledgements}:  Work on this article was begun when the first author was affiliated with the Laboratoire de math\'ematiques AGM of the University of Cergy-Pontoise. He would like to thank its members for the pleasant and accommodating working environment.

\section{Preliminaries}

We fix a complex semi-simple Lie algebra $\g$ of rank $l$, pick a fixed Cartan subalgebra $\h$ and write the Cartan decomposition as $\g = \n^-\oplus \h \oplus \n^+$. We write $\bb = \bb^+ = \h\oplus \n^+$, and $\bb^- = \n^-\oplus \h$. We label the set $\Phi^+$ of simple positive roots by the set $I=\{1,\ldots,l\}$, and write $\Phi^+ = \{\alpha_r\mid r\in I\}$. We denote by $Q^+$ its $\Z_+$-span, by $Q$ its $\Z$-span (the root lattice) and by $\h^*_{\R}$ its $\R$-span. We let $(\,,\,)$ be any positive definite form on $\h_{\R}^*$ for which $A = (a_{rs})_{r,s\in I} =  \left((\du{\alpha}_r,\alpha_s)\right)_{r,s\in I}$ is the Cartan matrix of $\g$, where $\du{\alpha} = \frac{2}{(\alpha,\alpha)}\alpha$ for $\alpha\in \Phi^+$. We write $\Phi_d^+ = \{\omega_r\mid r\in I\}$ for the set of fundamental weights in $\h_{\R}^*$, so $(\omega_r,\du{\alpha}_s) = \delta_{rs}$. The $\Z_+$-span of $\Phi_d^+$ is denoted $P^+$, its $\Z$-span $P$ (the weight lattice).

We further fix a deformation parameter $0<q<1$, and write $q_r = q^{\frac{(\alpha_r,\alpha_r)}{2}}$ for $r\in \Phi^+$. We remark that if in what follows we ignore the $^*$-structure, our main result can be easily extended to the case of complex $q\ne0$ such that $q_r$ are not roots of unity.

For $m\geq n\geq 0$, we write \[\bqn{m}{n}{r} = q_r^{n(n-m)}\prod_{k=1}^n \frac{(1-q_r^{2m-2k+2})}{(1-q_r^{2k})}.\]

\subsection{Quantized universal enveloping algebras}

The following treatment of the algebraic structure of quantized universal enveloping (QUE) algebras is mainly based on \cite[Sections 2-4]{JoL1} (note that our $q$ is their $q^2$.)

\begin{Def} We define $\tilde{U}_q(\bb)$ to be the universal unital algebra generated by elements $E_r$, $r\in I$, as well as elements $L_{\omega}$, $\omega\in P$, such that for all $r \in I$ and $\omega,\chi\in P$, we have \begin{enumerate}
\item[1)] $L_{\omega}$ is invertible and $L_{\chi}L_{\omega}^{-1} = L_{\chi-\omega}$,
\item[2)] $L_{\omega}E_rL_{\omega}^{-1}   = q^{\frac{(\omega,\alpha_r)}{2}} E_r$.
\end{enumerate}
\end{Def}

There exists a unique Hopf algebra structure $(\tilde{U}_q(\bb),\Delta)$ on $\tilde{U}_q(\bb)$ such that $\Delta(L_{\omega}) = L_{\omega}\otimes L_{\omega}$ and \[\Delta(E_r) = E_r\otimes L_{\alpha_r}+ L_{\alpha_r}^{-1}\otimes E_r.\]
The co-unit $\varepsilon$ is determined by $\varepsilon(E_r)=0$ for all $r$ and $\varepsilon(L_{\omega})=1$ for all $\omega$, while the antipode $S$ satisfies $S(L_{\omega})=L_{\omega}^{-1}$ and $S(E_r) =-q_rE_r$.

As $\tilde{U}_q(\bb)$ is a Hopf algebra, it carries a right adjoint action \[x\lhd y = S(y_{(1)})xy_{(2)},\] where we have used the Sweedler notation $\Delta(x) = x_{(1)}\otimes x_{(2)}$. In this way, $\tilde{U}_q(\bb)$ becomes a right $\tilde{U}_q(\bb)$-module algebra over itself.

\begin{Def} We define $U_q(\bb)$ to be the quotient algebra of $\tilde{U}_q(\bb)$ by the relations \begin{equation}\label{EqAdNil} L_{-4\omega_s}\lhd (E_sE_r^{1-a_{rs}})=0,\qquad \textrm{for all }r\neq s.\end{equation}
\end{Def}

\begin{Lem} Condition \eqref{EqAdNil} is equivalent to the quantum Serre relations
\[ \sum_{k=0}^{1-a_{rs}}  (-1)^k \bqn{1-a_{rs}}{k}{r} E_r^kE_sE_r^{1-a_{rs}-k} = 0 \qquad \textrm{for all }r\neq s,\] and the coproduct on $\tilde{U}_q(\bb)$ descends to a Hopf algebra coproduct on $U_q(\bb)$.
\end{Lem}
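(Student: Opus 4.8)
The plan is to reduce both assertions to two standard computations: that, up to multiplication by an invertible element of $\tilde{U}_q(\bb)$, the element $u_{rs}:=L_{-4\omega_s}\lhd(E_sE_r^{1-a_{rs}})$ agrees with the quantum Serre expression $\theta_{rs}:=\sum_{k=0}^{1-a_{rs}}(-1)^k\bqn{1-a_{rs}}{k}{r}E_r^kE_sE_r^{1-a_{rs}-k}$, and that $\theta_{rs}$ is skew-primitive with group-like coefficients. Both are the classical manipulations underlying the quantum Serre relations and their coproduct (as in Lusztig's or Jantzen's books), so I would only sketch them.

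\emph{The equivalence.} Using the module algebra identity $x\lhd(yz)=(x\lhd y)\lhd z$ one has $u_{rs}=\ad_r^{\,1-a_{rs}}\!\bigl(L_{-4\omega_s}\lhd E_s\bigr)$, where $\ad_r(x):=x\lhd E_r$. From relations~1)--2) and the formula for $S$ one computes directly $L_{-4\omega_s}\lhd E_s=(q_s^{-1}-q_s)\,E_sL_{\alpha_s-4\omega_s}$, and, for a weight vector $x$ of weight $\beta$ and any $\nu\in P$, $\ad_r(xL_\nu)=q_r\bigl(q^{(\alpha_r,\beta)/2}\,xL_\nu E_r-E_r\,xL_\nu\bigr)L_{\alpha_r}$; thus $\ad_r$ acts as a $q$-commutator with $E_r$ up to an invertible $L$-twist. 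One then proves by induction on $n$, using the $q_r$-Pascal recursion for $\bqn{n}{k}{r}$, that
\[\ad_r^{\,n}\!\bigl(L_{-4\omega_s}\lhd E_s\bigr)=c_n\Bigl(\sum_{k=0}^{n}(-1)^k\bqn{n}{k}{r}E_r^kE_sE_r^{\,n-k}\Bigr)L_{\nu_n}\]
for a nonzero scalar $c_n$ and a suitable $\nu_n\in P$. Specializing to $n=1-a_{rs}$ gives $u_{rs}=c\,\theta_{rs}L_\nu$ with $c$ a nonzero scalar and $L_\nu$ invertible, so for each $r\ne s$ the relation $u_{rs}=0$ is equivalent to $\theta_{rs}=0$; this is the asserted equivalence.

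\emph{Descent of the coproduct.} By the equivalence just shown, the two-sided ideal $J\subseteq\tilde{U}_q(\bb)$ cut out by \eqref{EqAdNil} is also generated by the $\theta_{rs}$. A standard computation --- an induction that can be run in parallel with the one above --- shows, as an identity already in $\tilde{U}_q(\bb)$, that $\theta_{rs}$ is skew-primitive: $\Delta(\theta_{rs})=\theta_{rs}\otimes L_\beta+L_\beta^{-1}\otimes\theta_{rs}$ with $\beta=(1-a_{rs})\alpha_r+\alpha_s$. Multiplying by $\Delta(L_\nu)=L_\nu\otimes L_\nu$, each generator $u_{rs}$ of $J$ is again skew-primitive with group-like coefficients: $\Delta(u_{rs})=u_{rs}\otimes L_{\beta+\nu}+L_{\nu-\beta}\otimes u_{rs}$. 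Hence $\Delta(a\,u_{rs}\,b)=\Delta(a)\Delta(u_{rs})\Delta(b)\in J\otimes\tilde{U}_q(\bb)+\tilde{U}_q(\bb)\otimes J$ for all $a,b\in\tilde{U}_q(\bb)$, so $\Delta(J)\subseteq J\otimes\tilde{U}_q(\bb)+\tilde{U}_q(\bb)\otimes J$ and $\Delta$ descends to $U_q(\bb)$. Applying $\varepsilon\otimes\id$ to $\Delta(u_{rs})$ forces $\varepsilon(u_{rs})=0$, and the antipode axiom forces $S(u_{rs})\in J$; thus $J$ is a Hopf ideal, $U_q(\bb)$ carries the induced Hopf algebra structure, and the quotient map is a Hopf algebra morphism.

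\emph{Main obstacle.} The only genuine work lies in the two inductions --- checking that the $q_r$-binomial recursion closes up correctly once one keeps careful track of the powers of $q$ produced by commuting $L$'s past $E$'s and of the $L$-twists attached by $\ad_r$, and similarly for the coproduct. These are the classical computations behind the quantum Serre relations, so there is no conceptual obstacle; the one point requiring attention is matching the paper's conventions --- in particular that ``$q$'' here plays the role of ``$q^2$'' in the standard references and that the coproduct used is the symmetric one $\Delta(E_r)=E_r\otimes L_{\alpha_r}+L_{\alpha_r}^{-1}\otimes E_r$ --- which affects only the bookkeeping of exponents and of the elements $\nu_n,\nu,\beta$.
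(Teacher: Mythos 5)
Your overall route is sound, and it is in substance the computation behind the paper's one-line proof (the paper simply cites Joseph--Letzter, Lemmas 4.5--4.9): the reduction $u_{rs}=\ad_r^{\,1-a_{rs}}(L_{-4\omega_s}\lhd E_s)$ with $\ad_r(x)=x\lhd E_r$, the evaluation $L_{-4\omega_s}\lhd E_s=(q_s^{-1}-q_s)E_sL_{\alpha_s-4\omega_s}$, the description of $\ad_r$ as a twisted $q$-commutator followed by an invertible $L$-twist, and the Hopf-ideal argument via skew-primitivity of the Serre elements (plus the counit and antipode checks) are all correct and suffice for both halves of the statement once the closed form of the iterated adjoint action is established.

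The one genuine flaw is the induction hypothesis you propose: the identity
\[\ad_r^{\,n}\!\bigl(L_{-4\omega_s}\lhd E_s\bigr)=c_n\Bigl(\sum_{k=0}^{n}(-1)^k\bqn{n}{k}{r}E_r^kE_sE_r^{\,n-k}\Bigr)L_{\nu_n}\]
with a $k$-independent scalar $c_n$ is false for intermediate $n$, so the induction as stated does not close. Already at $n=1$ with $a_{rs}=-1$ one finds, using $(\alpha_r,\alpha_s-4\omega_s)=(\alpha_r,\alpha_s)$ for $r\neq s$, that each application of $\ad_r$ is a $q$-commutator with step-dependent twist $q_r^{a_{rs}+2j}$, $j=0,1,\dots$, and the first step gives (up to a nonzero scalar and an invertible $L$) the element $q_r^{a_{rs}}E_sE_r-E_rE_s$, which is not proportional to $E_sE_r-E_rE_s=\sum_{k=0}^{1}(-1)^k\bqn{1}{k}{r}E_r^kE_sE_r^{1-k}$. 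In general the closed form carries extra $k$-dependent powers of $q_r$ (of the shape $q_r^{(n-k)(n-1+a_{rs})}$ relative to the balanced binomials), and these disappear precisely at $n=1-a_{rs}$; this cancellation is exactly the point the lemma turns on. So you must run the induction with the more general closed form and specialize at $n=1-a_{rs}$ at the end (one can check, e.g., for $a_{rs}=-1$, $n=2$ that one then lands exactly on $E_sE_r^2-(q_r+q_r^{-1})E_rE_sE_r+E_r^2E_s$, matching $\bqn{2}{1}{r}=q_r+q_r^{-1}$). With that correction the equivalence of ideals, and hence of the two sets of defining relations, follows as you argue, and the skew-primitivity computation for the descent of $\Delta$ (which likewise requires tracking the same $q$-powers in this balanced-coproduct convention) goes through as sketched.
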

\begin{proof} See \cite[Lemmas 4.5-4.9]{JoL1}.
\end{proof}

In the following, we will denote $U_q(\bb^+)$ for $U_q(\bb)$ with the above coproduct, and $U_q(\bb^-)$ for $U_q(\bb)$ with the opposite coproduct. The generators of $U_q(\bb^-)$ will then be written $\{F_r,L_{\omega}'\}$.

\begin{Def} We define $U_q(\g)$ to be the unital algebra with the universal property that it contains and is generated by $U_q(\bb^+)$ and $U_q(\bb^-)$ as subalgebras, and with $L_{\omega}^{-1} = L_{\omega}'$ and \[\lbrack E_r,F_s\rbrack = \delta_{rs} \frac{L_{\alpha_r}^2-L_{\alpha_r}^{-2}}{q_r-q_r^{-1}}.\]
\end{Def}

It is known that the homomorphisms of $U_q(\bb^+)$ and $U_q(\bb^{-})$ into $U_q(\g)$ are faithful, see e.g.~\cite[Lemma 4.8]{JoL1}, \cite[p. 170, Remark 6]{KS1}.

Finally, we define a Hopf $^*$-algebra structure $(U_q(\g),\Delta)$ on $U_q(\g)$ such that $\Delta$ coincides with the above coproduct on $U_q(\bb^{+})$ and $U_q(\bb^{-})$, and with the $^*$-structure given by $E_r^* = F_r$ and $L_{\omega}^* = L_{\omega}$.

\begin{Rem} The above version of $U_q(\g)$ is sometimes referred to as the `simply connected version' of the QUE algebra, since the Cartan subalgebra is generated by the weight lattice (cf. \cite[Remark 4 following Definition-Proposition 9.1.1]{CP}). One can also work with the root lattice in the above definition, in which case one obtains a smaller Hopf $^*$-algebra which we will denote by $\CU_q(\g)$.
\end{Rem}

The antipode on $U_q(\g)$ is given by
$$
S(L_\omega)=L_{-\omega},\ \ S(E_r)=-q_rE_r,\ \ S(F_r)=-q_r^{-1}F_r.
$$
A simple computation shows that if we let $\rho = \sum_i\omega_i$, then $L_{-4\rho}^{-1}xL_{-4\rho} = S^2(x)$ for all $x\in U_q(\g)$.

It will also be convenient to use the unitary antipode, defined by $R(x)=L_{-2\rho}S(x)L_{2\rho}$, so $R$ is an involutive $^*$-anti-automorphism of $U_q(\g)$ such that
$$
R(L_\omega)=L_{-\omega},\  \ R(E_r)=-E_r,\ \ R(F_r)=-F_r.
$$

\subsection{Representation theory of quantized enveloping algebras}

We now recall basic facts about the representation theory of $U_q(\g)$, see e.g.~\cite[Chapter~10]{CP}.

By a \emph{type $I$ representation} of $U_q(\g)$ we will mean a unital $^*$-representation of $U_q(\g)$ on a finite-dimensional Hilbert space $V$ such that the operators $L_{\omega}$ are positive.  For such a~$V$, we say that $\xi\in V$ is a vector of weight $\lambda$ if $\xi$ is non-zero and $L_{\omega}\xi = q^{\frac{1}{2}(\omega,\lambda)}\xi$ for all $\omega$. We denote by $V(\lambda)$ the space of vectors of weight $\lambda$. If $\xi$ is a weight vector, we will denote its weight by $\wt(\xi)$. Any type $I$ representation is a direct sum of its weight subspaces $V(\lambda)$.
The type $I$-representations of $U_q(\g)$ and $\CU_q(\g)$ coincide.

We denote by $\bar V$ the complex conjugate Hilbert space equipped with the representation defined by $x\bar\xi=\overline{R(x^*)\xi}$, where $R$ is the unitary antipode.

We say that $\xi$ is a highest weight vector of weight $\lambda$ if it is a vector of weight $\lambda$ which is annihilated by all $E_r$. A type $I$ representation is irreducible if and only if it is generated by a highest weight vector. Moreover, $\lambda$ appears as a highest weight for some irreducible type $I$-representation if and only if $\lambda\in P^+$. If $\lambda$ is a positive integral weight, we write $V_{\lambda}$ for the associated irreducible module. Then $\bar V_{\lambda}  \cong V_{-w_0\lambda}$, where $w_0$ is the longest element in the Weyl group of $\g$. We will once and for all choose unit norm highest weight vectors~$h_{\lambda}$ for each $V_{\lambda}$. Then $\bar h_{\lambda}$ will be a lowest weight vector in $\bar V_{\lambda}$ of weight $-\lambda$.

Let \[\U_q(\g) = \prod_{\omega\in P^+} B(V_{\omega}),\] and consider $U_q(\g)\subseteq \U_q(\g)$ in the natural way. We can equip $\U_q(\g)$ with a `coproduct' $\Delta$ into \[\U_q(\g)\hat{\otimes}\U_q(\g) =\prod_{\omega,\lambda\in P^+} \left(B(V_{\omega})\otimes B(V_{\lambda})\right)\] such that its restriction to $U_q(\g)$ becomes the ordinary coproduct. It is clear that any type $I$ representation of $U_q(\g)$ can be extended uniquely to $\U_q(\g)$, and similarly tensor products of type $I$-representations for $U_q(\g)^{\otimes n}$ can be extended uniquely to $\U_q(\g)^{\hat{\otimes}n}$.

With our conventions, the universal $R$-matrix $\RR$ is then an element of $\U_q(\g)\hat{\otimes}\U_q(\g)$, uniquely determined by the conditions that
$\RR\Delta(x)\RR^{-1} = \Delta^{\mathrm{op}}(x)$ for all $x\in U_q(\g)$ and \[\RR(h_{\lambda}\otimes \bar h_{\mu}) = q^{-(\lambda,\mu)}(h_{\lambda}\otimes \bar h_{\mu})\] on $V_{\lambda}\otimes\bar V_{\mu}$. An explicit formula for $\RR$ can be found e.g.~in \cite[Theorem~8.3.9]{CP}. The only thing we will need is that the $R$-matrix has the form $\RR=\QQ\tilde\RR$, where
\begin{equation}\label{ermatrix}
\QQ(\xi\otimes\eta)=q^{(\wt(\xi),\wt(\eta))}\xi\otimes\eta\ \ \text{and}\ \ \tilde\RR=1+\sum_{r\in I} q_r^{-1}(q_r-q_r^{-1})L_{\alpha_r}E_r\otimes L_{-\alpha_r}F_r+\dots,
\end{equation}
and where the additional terms of $\tilde\RR$ map $\xi\otimes\eta\in V\otimes V'$ into $V(\wt(\xi)+\alpha)\otimes V'(\wt(\eta)-\alpha)$ with $\alpha\in Q^+\setminus(\Phi^+\cup\{0\})$,.

\subsection{Quantized semi-simple compact Lie groups}

We now turn to a Hopf algebra dual of $U_q(\g)$, see e.g.~\cite{KoS1,NeT1}.

The unital $^*$-algebra $\Pol(\G_q)$ is defined as the subspace of $U_q(\g)^*$ spanned by the linear functionals  of the form \[U(\xi,\eta)\colon x\mapsto \langle \xi,x\eta\rangle,\] where $\xi,\eta$ belong to a type $I$ representation $V$ of $U_q(\g)$ (clearly, we assume that Hermitian scalar products are linear in the second variable). It is a Hopf $^*$-algebra with the product/coproduct dual to the coproduct/product of $U_q(\g)$, and with the $^*$-operation given by
$\omega^*(x)=\overline{\omega(S(x)^*)}$. Recalling that the module structure on $\bar V$ is defined using the unitary antipode, we can also write
\begin{equation}\label{estar}
U(\xi,\eta)^*=q^{-(\rho,\wt(\xi)-\wt(\eta))}U(\bar\xi,\bar\eta).
\end{equation}

The symbol $\G_q$ should be seen as a quantization of the simply connected compact Lie group integrating the compact form of $\g$.

The $^*$-algebra $\Pol(\G_q)$ is a $U_q(\g)$-bimodule $^*$-algebra, by means of the module structures \[x\rhd U(\xi,\eta)\lhd y = U(y^*\xi,x\eta).\]

\begin{Lem}\label{LemSwitch} Let $V_i$ be type $I$ representations, and $\xi_i,\eta_i \in V_i$, $i=1,2$. Then
\begin{eqnarray*} U(\xi_1,\eta_1)U(\xi_2,\eta_2) &=& U\left(\RR_{21}(\xi_2\otimes \xi_1),\RR^{-1}(\eta_2\otimes \eta_1)\right)\\
&=& U\left(\RR^{-1}(\xi_2\otimes \xi_1),\RR_{21}(\eta_2\otimes \eta_1)\right).
\end{eqnarray*}
\end{Lem}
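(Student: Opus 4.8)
The plan is to compute the product $U(\xi_1,\eta_1)U(\xi_2,\eta_2)$ directly from the definitions, using that the product on $\Pol(\G_q)$ is dual to the coproduct on $U_q(\g)$, and that the coproduct is intertwined by the $R$-matrix. First I would recall that for $\omega,\omega'\in\Pol(\G_q)$ one has $(\omega\omega')(x)=(\omega\otimes\omega')(\Delta(x))$, so that $U(\xi_1,\eta_1)U(\xi_2,\eta_2)$ is the matrix coefficient of $V_1\otimes V_2$ on the vectors $\xi_1\otimes\xi_2$ and $\eta_1\otimes\eta_2$, i.e.\ $U(\xi_1,\eta_1)U(\xi_2,\eta_2)=U(\xi_1\otimes\xi_2,\eta_1\otimes\eta_2)$ where on the right the representation is the tensor product one via $\Delta$. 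The point is then to re-express this tensor-product matrix coefficient as one coming from the \emph{opposite} tensor product $V_2\otimes V_1$ via $\Delta^{\op}$, and for this the flip $\Sigma\RR$ (equivalently the operator $\xi\otimes\eta\mapsto\RR_{21}(\eta\otimes\xi)$) is the intertwiner.

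The key computation is the following. For $x\in U_q(\g)$ write $\Delta(x)=x_{(1)}\otimes x_{(2)}$, so $U(\xi_1,\eta_1)U(\xi_2,\eta_2)$ applied to $x$ equals $\langle\xi_1,x_{(1)}\eta_1\rangle\langle\xi_2,x_{(2)}\eta_2\rangle=\langle\xi_1\otimes\xi_2,\Delta(x)(\eta_1\otimes\eta_2)\rangle$ computed in $V_1\otimes V_2$. Now use $\RR\Delta(x)=\Delta^{\op}(x)\RR$, i.e.\ $\Delta(x)=\RR^{-1}\Delta^{\op}(x)\RR$, so this becomes $\langle\xi_1\otimes\xi_2,\RR^{-1}\Delta^{\op}(x)\RR(\eta_1\otimes\eta_2)\rangle=\langle(\RR^{-1})^*(\xi_1\otimes\xi_2),\Delta^{\op}(x)\RR(\eta_1\otimes\eta_2)\rangle$. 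Next, let $\Sigma\colon V_1\otimes V_2\to V_2\otimes V_1$ be the flip; since $\Sigma\Delta^{\op}(x)=\Delta(x)\Sigma$ as maps $V_1\otimes V_2\to V_2\otimes V_1$, applying $\Sigma$ on both slots turns the expression into a matrix coefficient in $V_2\otimes V_1$ of $x$ acting via $\Delta$, namely $\langle\Sigma(\RM^{-1})^*(\xi_1\otimes\xi_2),\Delta(x)\,\Sigma\RR(\eta_1\otimes\eta_2)\rangle$. By definition of the product on $\Pol(\G_q)$ this is $U\!\left(\Sigma(\RR^{-1})^*(\xi_1\otimes\xi_2),\ \Sigma\RR(\eta_1\otimes\eta_2)\right)$ as a matrix coefficient of $V_2\otimes V_1$. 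It remains to identify $\Sigma(\RR^{-1})^*(\xi_1\otimes\xi_2)$ with $\RR_{21}(\xi_2\otimes\xi_1)$ and $\Sigma\RR(\eta_1\otimes\eta_2)$ with $\RR^{-1}(\eta_2\otimes\eta_1)$; here one uses that $\RR$ is, in each pair of finite-dimensional type $I$ summands, a unitary-times-diagonal whose adjoint can be read off, together with the standard identities $\RR^*=\RR_{21}$ (valid for the unitary $R$-matrix in the compact setting, or from the explicit form~\eqref{ermatrix} that $\QQ$ is self-adjoint and $\tilde\RR^*=\tilde\RR_{21}$) and $\Sigma\RR\Sigma=\RR_{21}$. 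That gives the first equality; the second follows by the symmetric manipulation $\Delta(x)=\RR\,\Delta^{\op}(x)\,\RR^{-1}$, or equivalently by applying the first equality with the roles of $\RR$ and $\RR_{21}^{-1}$ swapped (both satisfy the defining intertwining relation up to the same normalization check).

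I expect the main obstacle to be bookkeeping rather than conceptual: one must be careful about where adjoints land because the inner products are linear in the second variable, so $\langle v,Aw\rangle=\langle A^*v,w\rangle$, and one must correctly track the flip $\Sigma$ acting on bras versus kets. The relation $\RR^*=\RR_{21}$ should be justified carefully in our conventions --- either cited as the unitarity property of the $R$-matrix of a compact quantum group, or checked from~\eqref{ermatrix} by noting that $\QQ$ is self-adjoint (it is diagonal with real eigenvalues $q^{(\wt(\xi),\wt(\eta))}$) and that each higher term $L_{\alpha}E\otimes L_{-\alpha}F$ has adjoint $L_{\alpha}F\otimes L_{-\alpha}E$ sitting in the $(2,1)$-position after accounting for the weight shifts. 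Once these two normalization facts ($\RR^*=\RR_{21}$ and $\Sigma\RR\Sigma=\RR_{21}$) are in hand, both displayed equalities drop out of the single intertwining identity $\RR\Delta=\Delta^{\op}\RR$ applied in the two possible directions, so there is essentially one computation to do.
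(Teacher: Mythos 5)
Your overall route is exactly the paper's: the paper's proof is the one-line observation that both $\RR$ and $\RR_{21}^{-1}$ flip the coproduct together with $\RR^*=\RR_{21}$, and your dualization of the product to the coproduct, conjugation of $\Delta$ into $\Delta^{\op}$, and transfer across the inner product is the intended computation. However, the final identification step --- precisely the bookkeeping you flagged as the danger --- is wrong as written. From $(\RR^{-1})^*=(\RR^*)^{-1}=\RR_{21}^{-1}$ and the flip relations $\Sigma\RR=\RR_{21}\Sigma$, $\Sigma\RR_{21}^{-1}=\RR^{-1}\Sigma$, one gets
\[
\Sigma(\RR^{-1})^*(\xi_1\otimes\xi_2)=\RR^{-1}(\xi_2\otimes\xi_1),\qquad
\Sigma\RR(\eta_1\otimes\eta_2)=\RR_{21}(\eta_2\otimes\eta_1),
\]
not $\RR_{21}(\xi_2\otimes\xi_1)$ and $\RR^{-1}(\eta_2\otimes\eta_1)$ as you assert; the identities you claim would force $\RR_{21}=\RR^{-1}$, which fails for $q\neq1$. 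So the computation you actually carried out proves the \emph{second} displayed equality of the lemma. For the first equality you need the symmetric computation, and there your starting identity ``$\Delta(x)=\RR\,\Delta^{\op}(x)\,\RR^{-1}$'' is also incorrect: $\RR^{-1}$ does not flip the coproduct, $\RR_{21}^{-1}$ does (apply the flip to both legs of $\RR\Delta(x)\RR^{-1}=\Delta^{\op}(x)$ to get $\Delta(x)=\RR_{21}\Delta^{\op}(x)\RR_{21}^{-1}$). Running your argument with this identity and with $\RR_{21}^*=\RR$ yields $U\left(\RR_{21}(\xi_2\otimes\xi_1),\RR^{-1}(\eta_2\otimes\eta_1)\right)$, i.e.\ the first equality. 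Your parenthetical remark about swapping the roles of $\RR$ and $\RR_{21}^{-1}$ shows you know the fix, so this is a repairable slip rather than a conceptual gap, but as written each displayed equality is attributed to the wrong computation and the stated identifications are false.

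A smaller point: your fallback justification of $\RR^*=\RR_{21}$ from \eqref{ermatrix} does not go through. The operator $\RR$ is not unitary, and $\tilde\RR^*\neq\tilde\RR_{21}$; what $\RR^*=\RR_{21}$ actually amounts to is $\tilde\RR^*=\QQ\tilde\RR_{21}\QQ^{-1}$, since $\QQ=\QQ^*=\QQ_{21}$ and conjugation by $\QQ$ rescales the terms (this is already visible in the displayed first-order term, and in any case \eqref{ermatrix} is truncated). The clean option is what the paper does: quote $\RR^*=\RR_{21}$ in these $*$-conventions, e.g.\ from \cite[Example~2.6.4]{NeT1}.
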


\begin{proof} Straightforward from the fact that $\RR$ and $\RR_{21}^{-1}$ flip the coproduct, and the fact that $\RR^* = \RR_{21}$ (cf. \cite[Example 2.6.4]{NeT1}).
\end{proof}

The $^*$-algebra $\Pol(\G_q)$ has as well an interesting and tractable representation theory. Consider first the case where $\g = \ssl(2,\C)$, in which case we let $I$ consist of the empty symbol, and assume that $(\alpha,\alpha)=2$. We then write $\G_q = \SU_q(2)$. As usual, we label weights by half-integers. Write $h_{1/2} \in V_{1/2}$ for the highest weight vector and $h_{-1/2} = Fh_{1/2}$. Then the matrix \[\begin{pmatrix} a & c\\ b & d\end{pmatrix} = \begin{pmatrix} U(h_{-1/2},h_{-1/2})& U(h_{-1/2},h_{1/2})\\ U(h_{1/2},h_{-1/2})& U(h_{1/2},h_{1/2})\end{pmatrix}\] is unitary in $\Pol(\SU_q(2))\otimes M_2(\C)$, with $c=-qb^*$ and $d = a^*$. Moreover, these relations together with the unitarity of the above matrix provide universal relations for $\Pol(\SU_q(2))$. It is then easy to find all irreducible representations of $\Pol(\SU_q(2))$. There is one family of one-dimensional representations $\theta_z$ for $z\in \T=\{w\in \C\mid |w|=1\}$, given by \[\theta_z\begin{pmatrix} a & c \\ b & d\end{pmatrix} = \begin{pmatrix} z & 0\\ 0 & \bar{z}\end{pmatrix}.\] Apart from this, there is only one other one-parameter family of irreducible representations $\theta\otimes \theta_z$, where $\theta$ is the representation of $\Pol(\SU_q(2))$ on $l^2(\Z_+)$ by the operators \[ae_n = (1-q^{2n})^{1/2}e_{n-1},\qquad be_n = q^{n}e_n.\]

Consider now a general $\Pol(\G_q)$. Then $U_q(\g)$ contains a copy of the Hopf $^*$-algebra $\CU_{q_r}(\ssl(2,\C))$, generated by the elements $E_r,F_r$ and $L_{\alpha_r}^{\pm 1}$. It follows that one has a natural Hopf $^*$-algebra homomorphism \[\pi_r\colon \Pol(\G_q)\rightarrow \Pol(\SU_{q_r}(2)).\] Denote by $\theta_r$ the composition of $\pi_r$ with the irreducible representation $\theta$ of $\Pol(\SU_{q_r}(2))$ on $l^2(\Z_+)$. Let $s_i$ denote the reflections around simple roots in $\h_{\R}^*$, and let \[w = s_{i_1}\cdots s_{i_{l(w)}}\] be a reduced expression for an element $w$ in the Weyl group $W$ of $\g$, where $l(w)$ is the length of $w$. Then we can form the representation \[\theta_w = (\theta_{i_1}\otimes \cdots \otimes \theta_{i_{l(w)}})\Delta^{(l(w))}\] on $l^2(\Z_+)^{\otimes l(w)}$. The equivalence class of this representation is independent of the presentation of $w$. One, moreover, has a family of one-dimensional representations $\theta_z$ labeled by $z=(z_1,\ldots,z_l)\in \T^l$ and determined by \[\theta_z(U(\xi,\eta))= \langle\xi,\eta\rangle \prod_{k\in I} z_k^{(\alpha_k^\vee,\wt(\eta))}.\] Then any irreducible representation of $\Pol(\G_q)$ is equivalent to one of the form $\theta_{w,z}=(\theta_w\otimes \theta_z)\Delta$ on the Hilbert space $\Hsp_{w,z} = \ell^2(\Z_+)^{\otimes l(w)}$, and these representations are all mutually inequivalent.

Soibelman's proof of these results \cite{Soi1,KoS1} is based on an analysis of operators $U(h_{w\lambda},h_\lambda)$, where $h_{w\lambda}$ is a fixed unit vector in the one-dimensional space $V_\lambda(w\lambda)$. For our purposes it will be more convenient to use the operators $U(h_\lambda,h_{w^{-1}\lambda})$. In fact, in our conventions these operators are more natural, since they are of norm one.

\begin{Prop}\label{psoibelman}
Let $w\in W$ be an element of the Weyl group and $\lambda\in P^+$. Then
\begin{enumerate}
\item[{\rm 1)}]  if $\eta\in V_\lambda$ is orthogonal to $U_q(\bb^+)h_{w^{-1}\lambda}$, then $\theta_w(U(h_\lambda,\eta))=0$;
\item[{\rm 2)}] the operator $\theta_w(U(h_\lambda,h_{w^{-1}\lambda}))$ is diagonal with respect to the standard
basis of $\Hsp_w$, all its eigenvalues are nonzero and have modulus $\le1$, and $e_0^{\otimes l(w)}$ is an eigenvector with eigenvalue of modulus $1$; furthermore, if $\lambda$ is regular, then $e_0^{\otimes l(w)}$ is the only eigenvector with eigenvalue of modulus $1$.
\end{enumerate}
\end{Prop}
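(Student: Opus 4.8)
The plan is to reduce everything to the rank-one situation by exploiting the coproduct structure of $\theta_w$ and the multiplicativity relations of Lemma~\ref{LemSwitch}, and then to read off the diagonal action from the explicit description of $\theta$ on $\ell^2(\Z_+)$. First I would record the rank-one computation: for $\g=\ssl(2,\C)$ and $w=s$ the nontrivial reflection, $U(h_\lambda,h_{s^{-1}\lambda})=U(h_\lambda,h_{-\lambda})$ (since $s^{-1}\lambda=-\lambda$ on the $\SU_q(2)$ weight line) is, up to a scalar, a power of $a$; under $\theta$ it acts as $\prod(1-q^{2(n-j)})^{1/2}$-type diagonal operator on $\ell^2(\Z_+)$, with $e_0$ having eigenvalue of modulus one and all other eigenvalues of modulus strictly less than one. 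That already gives part 2) when $l(w)=1$: diagonality, nonvanishing of eigenvalues, $|{\cdot}|\le1$, and (since $\lambda$ regular just means $\lambda\ne0$ here) uniqueness of the modulus-one eigenvector.

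For part 1), fix $w$ with reduced expression $s_{i_1}\cdots s_{i_{l(w)}}$. Using $\Delta^{(l(w))}$ and the matrix-coefficient identity $U(\xi,\eta)$ applied to a coproduct, I would expand
\[
\theta_w(U(h_\lambda,\eta))=\sum (\theta_{i_1}\otimes\cdots\otimes\theta_{i_{l(w)}})\big(U(h_\lambda,\eta_{(1)})\otimes\cdots\big),
\]
and observe that $\theta_{i_1}(U(h_\lambda,\zeta))=0$ unless $\zeta$ has a component in $U_q(\bb^+)$ acting on the appropriate weight string — more precisely, $\theta_{i_1}\pi_{i_1}$ kills any matrix coefficient $U(h_\lambda,\zeta)$ with $\zeta\perp U_{q_{i_1}}(\bb^+)$-span inside the $\ssl_{i_1}(2)$-isotypical piece, because $\pi_{i_1}U(h_\lambda,\zeta)$ is a matrix coefficient of an $\SU_{q_{i_1}}(2)$-representation between the highest-weight vector and $\zeta$, and $\theta$ annihilates coefficients ending outside the cyclic span of the highest weight vector under $a,b$. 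Iterating this down the reduced word, and using that $U_q(\bb^+)h_{w^{-1}\lambda}$ is exactly the span obtained by successively applying the rank-one raising operators along $s_{i_{l(w)}},\dots,s_{i_1}$ (this is the standard description of the Demazure/Verma-type submodule generated by the extremal vector $h_{w^{-1}\lambda}$), one concludes $\theta_w(U(h_\lambda,\eta))=0$ whenever $\eta\perp U_q(\bb^+)h_{w^{-1}\lambda}$. I would phrase this as an induction on $l(w)$, peeling off $\theta_{i_1}$ and replacing $w$ by $s_{i_1}w$ of length $l(w)-1$, with $h_{w^{-1}\lambda}$ replaced by $h_{(s_{i_1}w)^{-1}\lambda}$.

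For part 2) in general, I would again induct on $l(w)$. Write $w=s_{i_1}w'$ with $l(w')=l(w)-1$, so $\theta_w(U(h_\lambda,h_{w^{-1}\lambda}))=(\theta_{i_1}\otimes\theta_{w'})\Delta(U(h_\lambda,h_{w^{-1}\lambda}))$. Using part 1) to discard the terms of $\Delta$ landing outside the relevant subspaces, the only surviving contribution is $\theta_{i_1}(U(h_\lambda,h_{(i_1)}))\otimes\theta_{w'}(U(h_{(i_1)},h_{w^{-1}\lambda}))$ where $h_{(i_1)}$ is (proportional to) the extremal vector $h_{s_{i_1}\lambda}$, and one recognizes $\theta_{w'}(U(h_{s_{i_1}\lambda},h_{w^{-1}\lambda}))$ as essentially the operator for $w'$ applied to the $U_{q_{i_1}}(\ssl_2)$-submodule generated by $h_{w^{-1}\lambda}$, whose extremal vectors match up correctly because $s_{i_1}\lambda=w'(w^{-1}\lambda)$ on the relevant weight line. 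The tensor product of a rank-one diagonal operator with the inductively-known diagonal operator is diagonal; nonvanishing of eigenvalues is preserved under tensor products of operators with nonvanishing eigenvalues; $\|{\cdot}\|\le1$ is preserved; and $e_0^{\otimes l(w)}=e_0\otimes e_0^{\otimes l(w')}$ is an eigenvector of modulus-one eigenvalue, being the tensor of the two modulus-one eigenvectors. For the regularity claim, if $\lambda$ is regular then each rank-one factor that appears corresponds to a \emph{nonzero} weight drop (the reduced word genuinely uses $s_{i_1}$), so each factor has $e_0$ as its \emph{unique} modulus-one eigenvector; a tensor product of operators each with a unique top eigenvector has a unique top eigenvector, namely $e_0^{\otimes l(w)}$.

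\textbf{Main obstacle.} The delicate point is the bookkeeping in the inductive step of part 2): identifying precisely which extremal weight vector $h_{(i_1)}$ appears when one restricts $\Delta(U(h_\lambda,h_{w^{-1}\lambda}))$ via part 1), and checking that $s_{i_1}\lambda$ really is the extremal weight of $h_{w^{-1}\lambda}$ for the $\ssl_{i_1}(2)$-copy and that the reduced-expression hypothesis guarantees the weight genuinely changes (so the rank-one operator is the nontrivial $a$-operator, not the identity). Making this matching rigorous requires keeping careful track of how the extremal vectors $h_{u\lambda}$ for $u\in W$ are normalized and how the rank-one raising operators move along the weight string $\lambda, s_{i_{l(w)}}\lambda, \dots, w^{-1}\lambda$; everything else (diagonality, the eigenvalue estimates, the regularity argument) follows formally once this combinatorial backbone is in place.
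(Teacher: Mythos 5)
Your overall strategy (induction along a reduced word, using the tensor decomposition of $\theta_w$ and part (1) to kill cross terms, plus a rank-one $\SU_q(2)$ computation) is the paper's, but as written the inductive step does not close. You peel the reduced word from the \emph{left}, writing $w=s_{i_1}w'$ and expanding $\theta_w=(\theta_{i_1}\otimes\theta_{w'})\Delta$. The statement you are inducting on, however, only concerns coefficients $U(h_\lambda,\eta)$ whose first slot is the global highest weight vector; after discarding terms on the first leg you are left with factors $\theta_{w'}(U(\xi,\eta))$ whose first slot is $F_{i_1}^kh_\lambda$ (or, in part (2), the extremal vector $h_{s_{i_1}\lambda}$), and the inductive hypothesis says nothing about these. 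This is exactly the ``delicate point'' you flag and leave unresolved, and it is not mere bookkeeping: you would need a strictly stronger statement about coefficients $U(h_{u\lambda},\eta)$ between extremal vectors to run your induction. The paper avoids the issue by peeling the \emph{last} letter: for $l(ws_i)>l(w)$ it expands $\theta_{ws_i}(U(h_\lambda,\eta))=\sum_j\theta_w(U(h_\lambda,\xi_j))\otimes\theta_i(U(\xi_j,\eta))$, so the inductive hypothesis applies verbatim to the first factor, while the new rank-one factor sits on the last leg and is controlled by Joseph's Lemma 4.4.3: $U_q(\bb^+)h_{s_iw^{-1}\lambda}$ contains $U_q(\bb^+)h_{w^{-1}\lambda}$ and is a $\CU_{q_i}(\ssl(2,\C))$-module, $E_ih_{w^{-1}\lambda}=0$, and $h_{w^{-1}\lambda}\in\C E_i^m h_{s_iw^{-1}\lambda}$ with $m=(w^{-1}\lambda,\alpha_i^\vee)$.

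Your rank-one input is also incorrect. In these conventions $U(h_\lambda,h_{-\lambda})\in\Pol(\SU_q(2))$ (highest against lowest weight vector) is, up to a phase, a power of $b=U(h_{1/2},h_{-1/2})$, not of $a$: one has $\theta(b)^m e_n=q^{nm}e_n$, which is diagonal with nonzero eigenvalues of modulus $\le1$ and $e_0$-eigenvalue of modulus $1$. By contrast $\theta(a)$ is the weighted shift $e_n\mapsto(1-q^{2n})^{1/2}e_{n-1}$, is not diagonal, and kills $e_0$; and the eigenvalue profile $\prod(1-q^{2(n-j)})^{1/2}$ you describe would give $e_0$ an eigenvalue of modulus strictly less than $1$ (or zero), contradicting part (2). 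The correct identification matters beyond the base case: in the inductive step the new factor is exactly $z\,\theta(b)^m$ with $m=(w^{-1}\lambda,\alpha_i^\vee)$, and the regularity claim comes from $m>0$ at every step (so each leg has $e_0$ as its unique modulus-one eigenvector), rather than from the reduced word ``genuinely using'' the letter. With the peeling direction reversed and the $b$-power computation in place, the rest of your argument (diagonality and uniqueness under tensor products) goes through as in the paper.
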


\begin{proof}
This is similar to the results in \cite{Soi1}.
We will sketch a proof for the reader's convenience. The proof is by induction on $l(w)$. Assume the result is true for $w$ and let's prove it for $ws_i$ such that $l(ws_i)>l(w)$. Take $\eta\in V_\lambda$. Choose an orthonormal basis $\{\xi_j\}_j$ in $V_\lambda$ such that each $\xi_j$ either lies in or is orthogonal to $U_q(\bb^+)h_{s_iw^{-1}\lambda}$. We have
$$
\theta_{ws_i}(U(h_\lambda,\eta))
=\sum_j\theta_w(U(h_\lambda,\xi_j))\otimes\theta_{i}(U(\xi_j,\eta)).
$$
Since $l(s_iw^{-1})>l(w^{-1})$, by \cite[Lemma 4.4.3(v)]{Jo} the $U_q(\bb^+)$-module $U_q(\bb^+)h_{s_iw^{-1}\lambda}$ contains $U_q(\bb^+)h_{w^{-1}\lambda}$ and is a $\CU_{q_i}(\ssl(2,\C))$-module. By the inductive assumption and the definition of $\theta_i$ it follows that if the summand corresponding to an index $j$ in the above expression is nonzero, then $\xi_j\in U_q(\bb^+)h_{w^{-1}\lambda} \subseteq U_q(\bb^+)h_{s_iw^{-1}\lambda}$ and $\eta$ is not orthogonal to $\CU_{q_i}(\ssl(2,\C))\xi_j\subseteq U_q(\bb^+)h_{s_iw^{-1}\lambda}$. This proves (1). By \cite[Lemma 4.4.3(ii),(iii)]{Jo} we also have $E_ih_{w^{-1}\lambda}=0$ and $h_{w^{-1}\lambda}\in \C E_i^mh_{s_iw^{-1}\lambda}$ for $m=(w^{-1}\lambda,\alpha_i^\vee)\ge0$. From this we get
\begin{multline*}
\theta_{ws_i}(U(h_\lambda,h_{s_iw^{-1}\lambda}))
=\theta_w(U(h_\lambda,h_{w^{-1}\lambda}))\otimes \theta_i(U(h_{w^{-1}\lambda},h_{s_iw^{-1}\lambda}))\\
=z\theta_w(U(h_\lambda,h_{w^{-1}\lambda}))\otimes \theta(b)^m,
\end{multline*}
where $z$ is a scalar factor of modulus $1$ and $b$ is the element $U(h_{1/2},h_{-1/2})\in\Pol(\SU_{q_i}(2))$. Using induction this allows one to explicitly compute $\theta_{ws_i}(U(h_\lambda,h_{s_iw^{-1}\lambda}))$ and prove (2).
\end{proof}

\begin{Cor} \label{ccommut}
Assume $w\in W$ and $\lambda\in P^+$. Then for all weight vectors $\xi,\eta$ in a type~$I$ $U_q(\g)$-module $V$ we have
$$
U(\xi,\eta)U(h_\lambda,h_{w^{-1}\lambda})=q^{-(\lambda,\wt(\xi)-w\wt(\eta))}
U(h_\lambda,h_{w^{-1}\lambda})U(\xi,\eta)\mod\ker\theta_w,
$$
$$
U(\xi,\eta)^*U(h_\lambda,h_{w^{-1}\lambda})=q^{(\lambda,\wt(\xi)-w\wt(\eta))}
U(h_\lambda,h_{w^{-1}\lambda})U(\xi,\eta)^*\mod\ker\theta_w.
$$
\end{Cor}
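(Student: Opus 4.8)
The plan is to derive the two commutation relations from Lemma~\ref{LemSwitch} together with the structure of the $R$-matrix given in~\eqref{ermatrix}, reducing everything to computations modulo $\ker\theta_w$ where Proposition~\ref{psoibelman} controls the relevant operators. First I would apply Lemma~\ref{LemSwitch} to the product $U(\xi,\eta)U(h_\lambda,h_{w^{-1}\lambda})$, writing it as $U(\RR_{21}(h_\lambda\otimes\xi),\RR^{-1}(h_{w^{-1}\lambda}\otimes\eta))$. Using $\RR=\QQ\tilde\RR$ and the fact that $h_\lambda$ is a highest weight vector annihilated by all $E_r$, the non-leading terms of $\tilde\RR$ (which raise the weight in the first leg by an element of $Q^+\setminus(\Phi^+\cup\{0\})$) still act on $h_\lambda$; so I cannot simply drop them outright. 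The key point is that they push the first argument into $V_\lambda(\wt=\lambda+\alpha)=0$ for $\alpha\in Q^+\setminus\{0\}$ — since $\lambda$ is the highest weight, $V_\lambda$ has no weights strictly above $\lambda$. Hence only the leading term $1$ of $\tilde\RR$ survives in the first leg when we pair against $h_\lambda$, and similarly only the leading term of $\tilde\RR$ in $\RR_{21}$ contributes. This collapses $\RR_{21}(h_\lambda\otimes\xi)$ to $\QQ_{21}(h_\lambda\otimes\xi)=q^{(\lambda,\wt(\xi))}h_\lambda\otimes\xi$.

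Next I would analyze the second leg $\RR^{-1}(h_{w^{-1}\lambda}\otimes\eta)$. Here $h_{w^{-1}\lambda}$ is an extremal weight vector of weight $w^{-1}\lambda$, not a highest weight vector, so the correction terms of $\tilde\RR^{-1}$ genuinely act. Write $\RR^{-1}(h_{w^{-1}\lambda}\otimes\eta)=q^{-(w^{-1}\lambda,\wt(\eta))}h_{w^{-1}\lambda}\otimes\eta+(\text{correction})$, where each correction term has first-leg component in $V_\lambda(w^{-1}\lambda+\alpha)$ with $\alpha\in Q^+\setminus\{0\}$. Applying $U(h_\lambda,-)$ in the first slot and feeding through $\theta_w$, I would invoke Proposition~\ref{psoibelman}(1): since $\alpha\in Q^+\setminus\{0\}$, the weight $w^{-1}\lambda+\alpha$ is strictly $>w^{-1}\lambda$ in the dominance order, and a standard fact (the extremal vector $h_{w^{-1}\lambda}$ is a lowest weight vector for the parabolic/Weyl-conjugated triangular decomposition, equivalently $U_q(\bb^+)h_{w^{-1}\lambda}$ contains no weights of the form $w^{-1}\lambda+\alpha$ with $\alpha\in Q^+\setminus\{0\}$ lying below in that twisted order...) — more precisely I would use that any vector orthogonal to $U_q(\bb^+)h_{w^{-1}\lambda}$ is killed by $\theta_w(U(h_\lambda,-))$, and check that the correction-term vectors are indeed orthogonal to $U_q(\bb^+)h_{w^{-1}\lambda}$ because that submodule is spanned by weights in $w^{-1}\lambda - Q^+_{w}$ for an appropriate inversion set, disjoint from $w^{-1}\lambda+Q^+\setminus\{0\}$. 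Thus modulo $\ker\theta_w$ only the leading term survives, giving $U(\xi,\eta)U(h_\lambda,h_{w^{-1}\lambda})\equiv q^{(\lambda,\wt(\xi))-(w^{-1}\lambda,\wt(\eta))}U(h_\lambda,h_{w^{-1}\lambda})U(\xi,\eta)$, and $(w^{-1}\lambda,\wt(\eta))=(\lambda,w\wt(\eta))$ by $W$-invariance of $(\,,\,)$, matching the exponent $-(\lambda,\wt(\xi)-w\wt(\eta))$ after accounting for the sign convention (one should also run the symmetric computation with $\RR^{-1}$ in the first leg and $\RR_{21}$ in the second from the other half of Lemma~\ref{LemSwitch} to fix signs cleanly).

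For the second identity I would take adjoints: using~\eqref{estar}, $U(\xi,\eta)^*$ is a scalar multiple of $U(\bar\xi,\bar\eta)$ in the conjugate module $\bar V$, with $\wt(\bar\xi)=-\wt(\xi)$ and $\wt(\bar\eta)=-\wt(\eta)$. Applying the first identity to $U(\bar\xi,\bar\eta)$ flips the sign of the exponent, and the scalars $q^{-(\rho,\cdot)}$ from~\eqref{estar} cancel between the two sides of the commutation relation (they appear symmetrically), yielding $U(\xi,\eta)^*U(h_\lambda,h_{w^{-1}\lambda})\equiv q^{(\lambda,\wt(\xi)-w\wt(\eta))}U(h_\lambda,h_{w^{-1}\lambda})U(\xi,\eta)^*\mod\ker\theta_w$. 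Alternatively, and perhaps more robustly, one can redo the $R$-matrix computation directly for $U(h_\lambda,h_{w^{-1}\lambda})U(\xi,\eta)^*$ using the other expression in Lemma~\ref{LemSwitch} and $\RR^*=\RR_{21}$.

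The main obstacle I anticipate is the orthogonality claim in the second leg: showing that every correction term produced by $\tilde\RR^{-1}$ acting on $h_{w^{-1}\lambda}\otimes\eta$ lands, in its first tensor factor, in the orthogonal complement of $U_q(\bb^+)h_{w^{-1}\lambda}$ so that Proposition~\ref{psoibelman}(1) applies. This requires knowing precisely which weight spaces occur in $U_q(\bb^+)h_{w^{-1}\lambda}$ — essentially that this is the ``Demazure-type'' submodule whose weights lie in $w^{-1}\lambda$ minus the cone spanned by $w^{-1}(\Phi^+)\cap\Phi^+$ — and contrasting this with the fact that $\tilde\RR^{-1}$ strictly \emph{raises} the first-leg weight by a positive root combination. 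Since a strictly higher weight $w^{-1}\lambda+\alpha$, $\alpha\in Q^+\setminus\{0\}$, can never appear in $U_q(\bb^+)h_{w^{-1}\lambda}$ (the highest weight of $V_\lambda$ is $\lambda$, and more sharply the extremal vector generates only weights $\le w^{-1}\lambda$ along $w^{-1}(\Phi^+)\cap\Phi^+$ directions — one must be careful here and may instead argue that $\theta_w$, being built from lowering operators $\theta_i(b)$ and the $\theta_i(a)$, annihilates $U(h_\lambda,\zeta)$ for $\wt(\zeta)\not\le w^{-1}\lambda$ suitably), the term drops. Pinning down this weight combinatorics cleanly — rather than hand-waving with ``extremal vector'' — is where the real care is needed; everything else is bookkeeping with the explicit leading term of $\tilde\RR$ in~\eqref{ermatrix} and $W$-invariance of the form.
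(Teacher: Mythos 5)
There is a genuine gap, and it comes from choosing the wrong branch of Lemma~\ref{LemSwitch} and then reversing the direction of the weight shifts of the $R$-matrix. You write the product as $U\bigl(\RR_{21}(h_\lambda\otimes\xi),\RR^{-1}(h_{w^{-1}\lambda}\otimes\eta)\bigr)$ and claim the corrections of $\RR_{21}$ push the first factor up into $V_\lambda(\lambda+\alpha)=0$. But by \eqref{ermatrix} the correction terms of $\tilde\RR$ raise the \emph{first} leg and lower the second, so the flipped element $\tilde\RR_{21}$ \emph{lowers} its first leg: $\RR_{21}(h_\lambda\otimes\xi)$ picks up genuine contributions in $V_\lambda(\lambda-\alpha)\otimes V(\wt(\xi)+\alpha)$, which are not killed by anything at your disposal (Proposition~\ref{psoibelman}(1) only controls matrix elements whose first vector is $h_\lambda$). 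Symmetrically, your treatment of the second leg is also wrong: the corrections of $\tilde\RR^{-1}$ put the first factor of $\RR^{-1}(h_{w^{-1}\lambda}\otimes\eta)$ into $V_\lambda(w^{-1}\lambda+\alpha)$, and such vectors are generally \emph{not} orthogonal to $U_q(\bb^+)h_{w^{-1}\lambda}$ — that submodule is generated by raising operators, so its weights lie exactly in $w^{-1}\lambda+Q^+$ (it even contains $h_\lambda$), which defeats the ``Demazure-type'' orthogonality you hope for. A concrete red flag: if your collapse were valid, the leading terms would give the coefficient $q^{+(\lambda,\wt(\xi)-w\wt(\eta))}$, the inverse of the asserted one; in $\SU_q(2)$ with $\xi=\eta=h_{1/2}$, $\lambda=1/2$ this would say $db\equiv q\,bd$ mod $\ker\theta$, whereas in fact $db\equiv q^{-1}bd$. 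No amount of ``accounting for the sign convention'' repairs this — the discarded correction terms genuinely matter in the branch you chose.

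The fix is the paper's route: use the \emph{other} expression in Lemma~\ref{LemSwitch}, namely $U(\xi,\eta)U(h_\lambda,h_{w^{-1}\lambda})=U\bigl(\RR^{-1}(h_\lambda\otimes\xi),\RR_{21}(h_{w^{-1}\lambda}\otimes\eta)\bigr)$. Then the corrections of $\tilde\RR^{-1}$ raise the weight of $h_\lambda$ and vanish since $\lambda$ is the highest weight, giving $\RR^{-1}(h_\lambda\otimes\xi)=q^{-(\lambda,\wt(\xi))}h_\lambda\otimes\xi$ exactly; while the corrections of $\RR_{21}$ push the first factor of the second argument down into $V_\lambda(w^{-1}\lambda-\alpha)$, $\alpha\in Q^+\setminus\{0\}$, which \emph{is} orthogonal to $U_q(\bb^+)h_{w^{-1}\lambda}$ (weights in $w^{-1}\lambda+Q^+$), so Proposition~\ref{psoibelman}(1) kills them, and the surviving leading term carries $q^{(\lambda,w\wt(\eta))}$, yielding precisely $q^{-(\lambda,\wt(\xi)-w\wt(\eta))}$. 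Your handling of the second identity — reducing to the first via \eqref{estar} and the conjugate module — is fine and is exactly what the paper does; only the derivation of the first identity needs to be redone as above.
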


\bp The first identity follows from Lemma~\ref{LemSwitch}, since by \eqref{ermatrix} we have $\RR^{-1}(h_\lambda\otimes\xi)=q^{-(\lambda,\wt(\xi))}h_\lambda\otimes\xi$, while $\RR_{21}(h_{w^{-1}\lambda}\otimes\eta)$ is equal to $q^{(\lambda, w\wt(\eta))}h_{w^{-1}\lambda}\otimes\eta$ plus vectors in $V_\lambda(w^{-1}\lambda-\alpha)\otimes V(\wt(\eta)+\alpha)$ for $\alpha\in Q^+\setminus\{0\}$, which produce matrix elements in the kernel of $\theta_w$. The second equality follows from the first one by recalling that by \eqref{estar} the element $U(\xi,\eta)^*$ coincides with $U(\bar\xi,\bar\eta)$ up to a scalar factor.
\ep

\subsection{Quantum subgroups of quantized semi-simple compact Lie groups}

Let $\SSS\subseteq \Phi^+$ be an arbitrary subset. Write $U_q(\kk_\SSS)$ for the Hopf $^*$-subalgebra of $U_q(\g)$ generated by all $L_{\omega}$ and all $E_r,F_r$ with $r\in \SSS$.

Write $\Pol(\K_{\SSS,q})\subseteq U_q(\kk_\SSS)^*$ for the resulting quotient of $\Pol(\G_q)$, with quotient map $\pi_\SSS$. We write \[\Pol(\G_q/\K_{\SSS,q}) = \{x\in \Pol(\G_q)\mid (\id\otimes \pi_\SSS)\Delta(x) = x\otimes 1\},\] and call $\G_q/\K_{\SSS,q}$ the associated quantum homogeneous space, or more specifically the \emph{quantum flag manifold} associated to $\SSS$. If $\SSS=\emptyset$, we call $\G_q/\T^l$ the \emph{full quantum flag manifold} of $\G_q$. For general $\SSS$, the algebra $\Pol(\G_q/\K_{\SSS,q})$ is a left coideal, in that \[\Delta(\Pol(\G_q/\K_{\SSS,q})) \subseteq \Pol(\G_q)\otimes\Pol(\G_q/\K_{\SSS,q}).\]

\section{Commutation relations in \texorpdfstring{$\Pol(\G_q/\K_{\SSS,q})$}{Pol(Gq/KS,q)}}

In the following we fix $\SSS\subseteq \Phi^+$. Whenever convenient, we will drop the $\SSS$ from the notation.

\subsection{An algebra of functions on the quantum big Schubert cell}

Denote by $W_\SSS$ the subgroup of the Weyl group generated by the simple reflections $s_i$ corresponding to $\alpha_i\in\SSS$. Let $w$ be the shortest element in $w_0W_\SSS$, where $w_0$ denotes the longest element in $W$. It is known that $\theta_w$ is an irreducible faithful representation of $\Pol(\G_q/\K_{\SSS,q})$ \cite[Theorem 5.9]{SD}. Because of this we will drop the notation $\theta_w$ when applied to elements of $\Pol(\G_q/\K_{\SSS,q})$ whenever it is convenient.

\begin{Def}\label{dk}
For $\lambda\in P^+$ define $k_{-4\lambda}\in\Pol(\G_q/\K_{\SSS,q})$ as the unique element such that
$$
k_{-4\lambda}=U(h_\lambda,h_{w^{-1}\lambda})^*U(h_\lambda,h_{w^{-1}\lambda})\mod\ker\theta_w.
$$
\end{Def}

Of course, we have to check that such elements exist. First of all, by \eqref{estar} we have
\begin{align*}
U(h_\lambda,h_{w^{-1}\lambda})^*U(h_\lambda,h_{w^{-1}\lambda})
&=q^{-(\rho,\lambda-w^{-1}\lambda)}U(\bar h_\lambda\otimes h_\lambda,
\bar h_{w^{-1}\lambda}\otimes h_{w^{-1}\lambda})\\
&=q^{-(\rho-w\rho,\lambda)}U(\bar h_\lambda\otimes h_\lambda,\bar h_{w^{-1}\lambda}\otimes h_{w^{-1}\lambda}).
\end{align*}
Now, consider the lowest weight $U_q(\kk_\SSS)$-module $V=U_q(\kk_\SSS)h_{w_0\lambda}\subseteq V_\lambda$. Let $w_{\SSS,0}$ be the longest element in $W_\SSS$.  Then  $w_{\SSS,0}w^{-1}\lambda=w_0\lambda$, so the highest weight of the $U_q(\kk_\SSS)$-module~$V$ is $w^{-1}\lambda$. Hence $h_{w^{-1}\lambda}$ is a highest weight vector of this module. The $U_q(\kk_\SSS)$-module $\bar V\otimes V$ contains a $U_q(\kk_\SSS)$-invariant vector $v_\lambda\ne0$, unique up to a scalar factor. We normalize this vector $v_\lambda$ so that
$\langle v_\lambda,\bar h_{w^{-1}\lambda}\otimes h_{w^{-1}\lambda}\rangle=1$. Since $V$ contains only vectors of weights not larger than $w^{-1}\lambda$, and the vector $v_\lambda$ is of weight zero, we have
\begin{equation} \label{einvvect}
v_\lambda=\bar h_{w^{-1}\lambda}\otimes h_{w^{-1}\lambda}\mod\sum_{\alpha\in Q^+\setminus\{0\}}\overline{ V_\lambda(w^{-1}\lambda-\alpha)}\otimes V_\lambda(w^{-1}\lambda-\alpha).
\end{equation}
By Proposition~\ref{psoibelman}(1) we conclude that
$$
U(\bar h_\lambda\otimes h_\lambda,\bar h_{w^{-1}\lambda}\otimes h_{w^{-1}\lambda})=U(\bar h_\lambda\otimes h_\lambda,v_\lambda) \mod\ker\theta_w.
$$
Therefore
\begin{equation} \label{ek}
k_{-4\lambda}=q^{-(\rho-w\rho,\lambda)}U(\bar h_\lambda\otimes h_\lambda,v_\lambda).
\end{equation}

\begin{Rem}\label{rinvvect}
A byproduct of this argument is that, up to a normalization, $k_{-4\lambda}$ is equal to $$p_\SSS\rhd (U(h_\lambda,h_{w_0\lambda})^*U(h_\lambda,h_{w_0\lambda})),$$ where $p_\SSS\in\U_q(\g)$ is the element acting as the orthogonal projection onto the space of $U_q(\kk_\SSS)$-invariant vectors.
\end{Rem}

For some weights $\lambda\in P^+$ the vector $h_{w^{-1}\lambda}$ is already of lowest weight in~$V_\lambda$, so that $v_\lambda=\bar h_{w^{-1}\lambda}\otimes h_{w^{-1}\lambda}$. Namely, this happens if and only if $w^{-1}\lambda$ is fixed by~$W_\SSS$, or equivalently, $w_0\lambda$ is fixed by $W_\SSS$. To describe a class of such weights, define an involution on $P$ by \begin{equation}\label{EqInv} \bar\lambda=-w_0\lambda.\end{equation} 
Denote by $P(\SSS^c)$ the subgroup of $P$ generated by the fundamental weights~$\omega_r$ corresponding to $\alpha_r\in\SSS^c=\Phi^+\setminus\SSS$, and put $P^+(\SSS^c)=P(\SSS^c)\cap P^+$. Then all weights in $P(\SSS^c)$ are fixed by $W_\SSS$. Therefore
$$
k_{-4\lambda}=U(h_\lambda,h_{w^{-1}\lambda})^*U(h_\lambda,h_{w^{-1}\lambda})
=U(h_\lambda,h_{w_0\lambda})^*U(h_\lambda,h_{w_0\lambda}) \ \ \text{if}\ \ \bar\lambda\in P^+(\SSS^c).
$$
The elements $U(h_\lambda,h_{w_0\lambda})^*U(h_\lambda,h_{w_0\lambda})$ with $\bar\lambda\in P(\SSS^c)$ are known to generate $\Pol(\G_q/\K_{\SSS,q})$ as a right $U_q(\g)$-module \cite[Theorem~2.5]{S}.
We thus get the following. 

\begin{Prop}\label{pgen}
The elements $k_{-4\lambda}$, $\bar\lambda\in P^+(\SSS^c)$, generate $\Pol(\G_q/\K_{\SSS,q})$ as a right $U_q(\g)$-module.
\end{Prop}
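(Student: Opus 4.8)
The plan is to combine the cited generation result from~\cite[Theorem~2.5]{S} with the equivariance structure already set up in the excerpt. Recall that~\cite[Theorem~2.5]{S} gives us that the elements $a_\mu := U(h_\mu,h_{w_0\mu})^*U(h_\mu,h_{w_0\mu})$ with $\bar\mu\in P(\SSS^c)$ generate $\Pol(\G_q/\K_{\SSS,q})$ as a right $U_q(\g)$-module. The first step is to note that it suffices to work with $\bar\mu\in P^+(\SSS^c)$ rather than all of $P(\SSS^c)$: any $\mu$ with $\bar\mu\in P(\SSS^c)$ can be written as $\mu=\lambda-\nu$ with $\bar\lambda,\bar\nu\in P^+(\SSS^c)$ (decompose $\bar\mu$ into its positive and negative parts with respect to the basis of fundamental weights indexed by $\SSS^c$), and then one checks that $a_\mu$ lies in the right $U_q(\g)$-submodule generated by $a_\lambda$ and $a_\nu$. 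Concretely, $U(h_\mu,h_{w_0\mu})$ can be recovered, up to lower-order corrections lying in the appropriate kernels, from a matrix coefficient of the subrepresentation of $V_\lambda\otimes \bar V_\nu$ generated by $h_\lambda\otimes\bar h_{w_0\nu}$, so its associated ``$k$-type'' element sits inside the module generated by the product-type element built from $a_\lambda$ and $a_\nu$; alternatively, one may simply observe that $V_\mu$ for $\mu$ with $\bar\mu\in P(\SSS^c)$ occurs in $V_\lambda\otimes V_{-w_0\nu}$ and track the relevant highest/lowest weight vectors, using that products of matrix coefficients are again matrix coefficients.

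The second, and main, step is to identify $a_\lambda$ with $k_{-4\lambda}$ for $\bar\lambda\in P^+(\SSS^c)$. But this is exactly the content of the computation in the paragraph preceding the Proposition: when $\bar\lambda\in P^+(\SSS^c)$, the weight $w_0\lambda$ (equivalently $w^{-1}\lambda$) is fixed by $W_\SSS$, so $v_\lambda=\bar h_{w^{-1}\lambda}\otimes h_{w^{-1}\lambda}$ and hence $w^{-1}\lambda=w_0\lambda$ in the relevant sense (more precisely $h_{w^{-1}\lambda}$ is already a lowest weight vector of $V_\lambda$, so $h_{w^{-1}\lambda}$ and $h_{w_0\lambda}$ span the same one-dimensional weight space and may be taken equal up to phase). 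Consequently $k_{-4\lambda}=U(h_\lambda,h_{w^{-1}\lambda})^*U(h_\lambda,h_{w^{-1}\lambda})=U(h_\lambda,h_{w_0\lambda})^*U(h_\lambda,h_{w_0\lambda})=a_\lambda$ genuinely in $\Pol(\G_q)$, not merely modulo $\ker\theta_w$. So the generating set from~\cite{S}, after the reduction in the first step, is precisely $\{k_{-4\lambda}\mid\bar\lambda\in P^+(\SSS^c)\}$, and the Proposition follows.

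I expect the main obstacle to be the reduction from $P(\SSS^c)$ to $P^+(\SSS^c)$, i.e.\ showing that the (a priori larger) generating set indexed by all of $P(\SSS^c)$ does not buy anything beyond the set indexed by $P^+(\SSS^c)$ as a \emph{right $U_q(\g)$-module}. The cleanest way to handle this is to verify that for $\bar\lambda,\bar\nu\in P^+(\SSS^c)$ the element $a_{\lambda-\nu}$ (suitably interpreted via the lowest weight of $V_{-w_0(\lambda-\nu)}$) is obtained from the product $a_\lambda\cdot(\text{something built from }a_\nu)$ by applying elements of $U_q(\g)$ and projecting; since $\Pol(\G_q/\K_{\SSS,q})$ is an algebra and the $a_\mu$'s for $\bar\mu\in P^+(\SSS^c)$ already generate it \emph{as an algebra} by~\cite[Theorem~2.5]{S} together with the present reduction, one can in fact argue more softly: it is enough that the right $U_q(\g)$-module generated by $\{k_{-4\lambda}\mid\bar\lambda\in P^+(\SSS^c)\}$ is closed under multiplication, or contains all of the algebra generators; tracking this through the description~\eqref{ek} of $k_{-4\lambda}$ as a single matrix coefficient $U(\bar h_\lambda\otimes h_\lambda,v_\lambda)$ keeps the bookkeeping manageable. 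If one is willing to quote~\cite[Theorem~2.5]{S} in the stronger ``algebra generation by $\bar\mu\in P^+(\SSS^c)$'' form, then the Proposition is immediate from the identification $k_{-4\lambda}=a_\lambda$ above, and essentially no extra work is needed.
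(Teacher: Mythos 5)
Your second step is precisely the paper's proof: for $\bar\lambda\in P^+(\SSS^c)$ the weight $w_0\lambda$ is fixed by $W_\SSS$, so $w^{-1}\lambda=w_0\lambda$, $v_\lambda=\bar h_{w^{-1}\lambda}\otimes h_{w^{-1}\lambda}$, and \eqref{ek} upgrades the congruence mod $\ker\theta_w$ to the genuine identity $k_{-4\lambda}=U(h_\lambda,h_{w_0\lambda})^*U(h_\lambda,h_{w_0\lambda})$ in $\Pol(\G_q/\K_{\SSS,q})$; combined with the right-module generation statement of \cite[Theorem~2.5]{S}, that is all the paper does. Your first step, however, addresses a non-issue: the element $U(h_\mu,h_{w_0\mu})^*U(h_\mu,h_{w_0\mu})$ is only defined for dominant $\mu$, and the involution $\mu\mapsto\bar\mu=-w_0\mu$ preserves $P^+$, so the condition $\bar\mu\in P(\SSS^c)$ in the cited statement automatically means $\bar\mu\in P(\SSS^c)\cap P^+=P^+(\SSS^c)$. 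There is nothing to reduce, and it is just as well, because your sketch of that reduction would not stand on its own: $a_\mu$ has no meaning for non-dominant $\mu$, the ``lower-order corrections'' and ``suitable interpretation'' are left unspecified, and the closing alternative conflates generation of $\Pol(\G_q/\K_{\SSS,q})$ as an algebra with generation as a right $U_q(\g)$-module --- the right $U_q(\g)$-module generated by the $k_{-4\lambda}$ is not obviously closed under multiplication, so an algebra-generation form of the citation would not immediately yield the proposition. With the superfluous first step and the final paragraph deleted, your argument coincides with the paper's.
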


Let us now establish some basic commutation relations for the elements $k_{-4\lambda}$.

\begin{Lem}\label{lkbasic}
We have:
\begin{itemize}
\item[{\rm 1)}] $k_{-4\lambda}k_{-4\mu}=k_{-4(\lambda+\mu)}$ for all $\lambda,\mu\in P^+$;
\item[{\rm 2)}] if $\xi$ is a weight vector in a type $I$ $U_q(\g)$-module $V$ and $\eta\in V$ is a $U_q(\kk_\SSS)$-invariant vector, then
$$
k_{-4\lambda}U(\xi,\eta)=q^{2(\lambda,\wt(\xi))}U(\xi,\eta)k_{-4\lambda}.
$$
\end{itemize}
\end{Lem}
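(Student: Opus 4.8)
The plan is to prove both statements by working inside the faithful representation $\theta_w$, exploiting the explicit description of $k_{-4\lambda}$ modulo $\ker\theta_w$ from Definition~\ref{dk} together with the commutation relations collected in Corollary~\ref{ccommut}. For part~1), I would use that $k_{-4\lambda}\equiv U(h_\lambda,h_{w^{-1}\lambda})^*U(h_\lambda,h_{w^{-1}\lambda})$ and $k_{-4\mu}\equiv U(h_\mu,h_{w^{-1}\mu})^*U(h_\mu,h_{w^{-1}\mu})$ mod $\ker\theta_w$, and that $h_\lambda\otimes h_\mu = h_{\lambda+\mu}$ and $h_{w^{-1}\lambda}\otimes h_{w^{-1}\mu}=h_{w^{-1}(\lambda+\mu)}$ as highest/extremal weight vectors of $V_\lambda\otimes V_\mu\supseteq V_{\lambda+\mu}$. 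Applying Lemma~\ref{LemSwitch} to the product $U(h_\lambda,h_{w^{-1}\lambda})U(h_\mu,h_{w^{-1}\mu})$ and to $U(h_\lambda,h_{w^{-1}\lambda})^*U(h_\mu,h_{w^{-1}\mu})^*$, the $\RR$- and $\RR_{21}$-actions on the relevant highest/lowest weight vectors are scalar (by \eqref{ermatrix}, $\RR^{-1}(h_\lambda\otimes h_\mu)$ and $\RR_{21}(h_\lambda\otimes h_\mu)$ are scalar multiples of $h_{\lambda+\mu}$, since $h_{\lambda+\mu}$ spans the relevant weight space after suitable reordering), so the four matrix coefficients collapse, up to scalars, onto $U(h_{\lambda+\mu},h_{w^{-1}(\lambda+\mu)})$ and its adjoint. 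Tracking the scalars — which should all be powers of $q$ governed by $(\lambda,\mu)$ and its Weyl translate — and comparing with the normalization built into Definition~\ref{dk}, one gets $k_{-4\lambda}k_{-4\mu}\equiv k_{-4(\lambda+\mu)}$ mod $\ker\theta_w$; since $\theta_w$ is faithful on $\Pol(\G_q/\K_{\SSS,q})$ and all three elements lie in that algebra, equality follows. Alternatively, and perhaps more cleanly, I would invoke Remark~\ref{rinvvect}: $k_{-4\lambda}$ is a normalized $p_\SSS\rhd\bigl(U(h_\lambda,h_{w_0\lambda})^*U(h_\lambda,h_{w_0\lambda})\bigr)$, and the elements $U(h_\lambda,h_{w_0\lambda})^*U(h_\lambda,h_{w_0\lambda})$ are known to multiply as $\lambda\mapsto\lambda+\mu$ up to an explicit $q$-power (this is the classical Schubert-cell computation), and $p_\SSS$ being a projection onto invariants interacts multiplicatively with this one-dimensional family.

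For part~2), the strategy is again to pass to $\theta_w$ and combine the two identities of Corollary~\ref{ccommut}. Write $k_{-4\lambda}\equiv A^*A$ with $A=U(h_\lambda,h_{w^{-1}\lambda})$. For a weight vector $\xi$ and a $U_q(\kk_\SSS)$-invariant vector $\eta$, Corollary~\ref{ccommut} gives, modulo $\ker\theta_w$,
$$
U(\xi,\eta)A = q^{(\lambda,\wt(\xi)-w\wt(\eta))} A\,U(\xi,\eta)\quad\text{and}\quad U(\xi,\eta)A^* = q^{-(\lambda,\wt(\xi)-w\wt(\eta))} A^*\,U(\xi,\eta),
$$
so that conjugating $U(\xi,\eta)$ past $A^*A$ produces $q^{-(\lambda,\wt(\xi)-w\wt(\eta))}q^{(\lambda,\wt(\xi)-w\wt(\eta))}=1$?—no: one must be careful with the order, $U(\xi,\eta)A^*A = q^{-(\lambda,\wt(\xi)-w\wt(\eta))}A^*U(\xi,\eta)A = q^{-(\lambda,\wt(\xi)-w\wt(\eta))}q^{(\lambda,\wt(\xi)-w\wt(\eta))}A^*A\,U(\xi,\eta)$, which would give commutation rather than the claimed $q^{2(\lambda,\wt(\xi))}$. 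The resolution is that the two estimates in Corollary~\ref{ccommut} are \emph{not} exact inverses of one another in the needed sense once $A^*$ is on the \emph{left}: I would instead apply the first identity of Corollary~\ref{ccommut} to move $U(\xi,\eta)$ rightward past $A^*A$ written as $A^*A$, using that the exponent attached to $A^*$ when $U(\xi,\eta)$ sits to its \emph{left} is governed by $U(\xi,\eta)^*$'s relation read backwards, i.e. picks up $q^{+(\lambda,\wt(\xi)-w\wt(\eta))}$ again rather than its inverse. Concretely, from the second line of Corollary~\ref{ccommut}, $A^*U(\xi,\eta)^* \equiv q^{(\lambda,\wt(\xi)-w\wt(\eta))}U(\xi,\eta)^*A^*$; taking adjoints, $U(\xi,\eta)A \equiv q^{(\lambda,\wt(\xi)-w\wt(\eta))}A\,U(\xi,\eta)$ once more—so in fact both $A$ and $A^*$ contribute the \emph{same} factor $q^{(\lambda,\wt(\xi)-w\wt(\eta))}$ when $U(\xi,\eta)$ is pushed to the right, yielding total factor $q^{2(\lambda,\wt(\xi)-w\wt(\eta))}$. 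Finally, $U_q(\kk_\SSS)$-invariance of $\eta$ forces $\wt(\eta)=0$ (an invariant vector for the full Cartan has weight zero, since $U_q(\kk_\SSS)\supseteq$ all $L_\omega$), so $w\wt(\eta)=0$ and the factor is exactly $q^{2(\lambda,\wt(\xi))}$. Thus $k_{-4\lambda}U(\xi,\eta)\equiv q^{2(\lambda,\wt(\xi))}U(\xi,\eta)k_{-4\lambda}$ mod $\ker\theta_w$, and faithfulness of $\theta_w$ upgrades this to an honest identity in $\Pol(\G_q/\K_{\SSS,q})$—provided both sides actually lie in that subalgebra, which holds because $U(\xi,\eta)$ with $\eta$ a $U_q(\kk_\SSS)$-invariant (hence $\pi_\SSS$-coinvariant) vector satisfies $(\id\otimes\pi_\SSS)\Delta(U(\xi,\eta))=U(\xi,\eta)\otimes1$.

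\textbf{Main obstacle.} The delicate point, as the discussion above shows, is the bookkeeping of the $q$-powers and of left-versus-right multiplication when conjugating $U(\xi,\eta)$ past $A^*A$: a careless application of Corollary~\ref{ccommut} gives plain commutation instead of the scalar $q^{2(\lambda,\wt(\xi))}$, so I must apply the \emph{same} (first) identity of the Corollary to both the $A$ on the right and — via adjoints — to the $A^*$ on the left, rather than pairing the two different identities against each other. A secondary technical check is that the error terms in $\RR_{21}(h_{w^{-1}\lambda}\otimes\eta)$, the ones landing in $V_\lambda(w^{-1}\lambda-\alpha)\otimes V(\wt(\eta)+\alpha)$ with $\alpha\in Q^+\setminus\{0\}$, genuinely produce matrix coefficients killed by $\theta_w$ — but this is already what Corollary~\ref{ccommut} asserts, so it can be quoted directly. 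Everything else (the weight-zero property of invariants, faithfulness of $\theta_w$, membership in the coideal) is standard or stated in the excerpt.
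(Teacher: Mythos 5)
Your overall strategy is the paper's: part (1) is the explicit Soibelman-type computation (the paper simply cites the proof of Proposition~\ref{psoibelman}(2), resp.\ Joseph's 9.1.10), and your route through Lemma~\ref{LemSwitch} is the same computation in substance -- the one-dimensionality of the weight space of weight $w^{-1}(\lambda+\mu)$ in $V_\lambda\otimes V_\mu$ does make the $R$-matrix act by a scalar there, and the ambient phases of the extremal vectors are harmless because only products of the form $A^*A$ are ever compared. Part (2) is likewise meant to be a two-line consequence of Corollary~\ref{ccommut} together with $\wt(\eta)=0$ and $U(\xi,\eta)\in\Pol(\G_q/\K_{\SSS,q})$, which is how the paper argues.

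However, your execution of part (2) rests on a false identity. Write $A=U(h_\lambda,h_{w^{-1}\lambda})$ and $c=(\lambda,\wt(\xi)-w\wt(\eta))$. Corollary~\ref{ccommut} states $U(\xi,\eta)A\equiv q^{-c}AU(\xi,\eta)$ and $U(\xi,\eta)^*A\equiv q^{c}AU(\xi,\eta)^*$ modulo $\ker\theta_w$; your first display misquotes the first of these with $q^{+c}$, and the ``paradox'' you then set out to resolve exists only because of that misquote. Worse, your proposed resolution, $A^*U(\xi,\eta)^*\equiv q^{c}U(\xi,\eta)^*A^*$ and hence $U(\xi,\eta)A\equiv q^{+c}AU(\xi,\eta)$, is neither the second line of the corollary nor its adjoint: since $\ker\theta_w$ is a $*$-ideal, the adjoint of the second line is $A^*U(\xi,\eta)\equiv q^{c}U(\xi,\eta)A^*$ (equivalently $U(\xi,\eta)A^*\equiv q^{-c}A^*U(\xi,\eta)$), while the adjoint of the first line gives $A^*U(\xi,\eta)^*\equiv q^{-c}U(\xi,\eta)^*A^*$. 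The identity you assert would, combined with the corollary's first line, force $c=0$, i.e.\ exactly the spurious commutation you were trying to avoid. The correct bookkeeping needs no resolution at all: $A^*AU(\xi,\eta)\equiv q^{c}A^*U(\xi,\eta)A\equiv q^{2c}U(\xi,\eta)A^*A$, so each of $A$ and $A^*$ contributes $q^{+c}$ when $U(\xi,\eta)$ is moved to the left (equivalently $q^{-c}$ each when moved to the right), and with $\wt(\eta)=0$ this gives $k_{-4\lambda}U(\xi,\eta)=q^{2(\lambda,\wt(\xi))}U(\xi,\eta)k_{-4\lambda}$ modulo $\ker\theta_w$, hence exactly, by faithfulness of $\theta_w$ on $\Pol(\G_q/\K_{\SSS,q})$. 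So your final formula is right, but the derivation as written is not, and it should be replaced by the computation above.
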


\begin{proof}
Part (1) follows from the proof of Proposition~\ref{psoibelman}(2). Alternatively, this follows from the corresponding result for the elements $U(h_\lambda,h_{w^{-1}\lambda})^*U(h_\lambda,h_{w^{-1}\lambda}),$ which in turn can be proved analogously to (or deduced from) \cite[9.1.10]{Jo}.

Part (2) follows from Corollary~\ref{ccommut}, since $\eta$ is necessarily of weight zero and $U(\xi,\eta)\in\Pol(\G_q/\K_{\SSS,q})$.
\end{proof}

For $\lambda\in P^+$ put $k_{-\lambda}=k_{-4\lambda}^{1/4}\in C(\G_q/\K_{\SSS,q})$, where $C(\G_q/\K_{\SSS,q})$ denotes the universal C$^*$-envelope of $\Pol(\G_q)$.
\footnote{Although the necessity of taking fourth roots is a peculiarity of our conventions, any choice of convention would at least require taking of square roots.} In view of part (1) of the above lemma this definition is unambiguous. We can then consider the $^*$-algebra  obtained by adjoining to $\Pol(\G_q/\K_{\SSS,q})$ the elements $k_{-\lambda}$ and their (formal) inverses $k_{\lambda}$. By multiplicativity, we can extend the definition of $k_\lambda$ to all $\lambda\in P$.

To obtain a concrete realization of this algebra, let $V_{\SSS}\subseteq \Hsp_{w}=\ell^2(\N)^{\otimes l(w)}$ be the algebraic subspace spanned by the standard basis of the Hilbert space $\ell^2(\N)^{\otimes l(w)}$, or equivalently and more canonically, by the joint eigenvectors of the operators $\theta_w(k_{-4\lambda})$ for $\lambda\in P^+$. Then we may identify $\Pol(\G_q/\K_{\SSS,q})$ with a subalgebra of $\End(V_{\SSS})$ by means of~$\theta_w$. The operators $k_\lambda$, $\lambda\in P$, are well-defined  and invertible on $V_\SSS$, so we can make the following definition.

\begin{Def}
We denote by $\Pol(\G_q/\K_{\SSS,q})_{\ext}$ the subalgebra of $\End(V_\SSS)$ generated by $\Pol(\G_q/\K_{\SSS,q})$ and the self-adjoint operators $k_{\lambda}$, $\lambda\in P$.
\end{Def}

The $^*$-algebra $\Pol(\G_q/\K_{\SSS,q})_{\ext}$ can be thought of as an algebra of functions on the big Schubert cell of $\G_q/\K_{\SSS,q}$, considered as a real manifold. 

As we will see later, the following elements together with $k_\lambda$, $\lambda\in P$, form generators of $\Pol(\G_q/\K_{\SSS,q})_{\ext}$.

\begin{Def} For $r\in I$, we define \[x_r^+ = (q_r^{-1}-q_r)^{-1} (k_{-4\omega_r} \lhd E_r)k_{4\omega_r-\alpha_r} \in \Pol(\G_q/\K_{\SSS,q})_{\ext},\]  and then define $x_r^- = (x_r^+)^*$.
\end{Def}

To get an explicit expression, for $\lambda\in P^+$ we compute:
\begin{align*}
k_{-4\lambda}\lhd E_r&=q^{-(\rho-w\rho,\lambda)}U(\bar h_\lambda\otimes h_\lambda,v_\lambda)\lhd E_r\\
&=q^{-(\rho-w\rho,\lambda)}U(F_r(\bar h_\lambda\otimes h_\lambda),v_\lambda)\\
&=q^{-(\rho-w\rho,\lambda)+(\alpha_r,\lambda)/2}U(\bar h_\lambda\otimes F_r h_\lambda,v_\lambda).
\end{align*}
By Proposition~\ref{psoibelman}(1) we therefore get
\begin{align*}
k_{-4\lambda}\lhd E_r&=q^{-(\rho-w\rho,\lambda)+(\alpha_r,\lambda)/2}U(\bar h_\lambda\otimes F_r h_\lambda,\bar h_{w^{-1}\lambda}\otimes h_{w^{-1}\lambda}) \mod\ker\theta_w\\
&=q_r^{(\alpha_r^\vee,\lambda)/2}U(h_\lambda,h_{w^{-1}\lambda})^*
U(F_rh_\lambda,h_{w^{-1}\lambda})\mod\ker\theta_w.
\end{align*}
Thus, letting $\lambda=\omega_r$, we get
\begin{equation} \label{explus}
x_r^+=q_r^{1/2}(q_r^{-1}-q_r)^{-1}\theta_w(U(h_{\omega_r},h_{w^{-1}\omega_r})^*
U(F_rh_{\omega_r},h_{w^{-1}\omega_r}))k_{4\omega_r-\alpha_r}.
\end{equation}

\subsection{Degenerate quantized universal enveloping algebra}

Put $\varepsilon_r = 1$ if $\bar\alpha_r\in \SSS$ and $\varepsilon_r=0$ if $\bar\alpha_r\notin \SSS$ (cf. \eqref{EqInv}). 

\begin{Def}
Denote by $U_q(\g;\SSS)$ %
the unital algebra with the universal property that it contains and is generated by $U_q(\bb^+)$ and $U_q(\bb^-)$ as subalgebras, and with $L_{\omega}^{-1} = L_{\omega}'$ and \[\lbrack E_r,F_s\rbrack = \delta_{rs} \frac{\varepsilon_{r}L_{\alpha_r}^2-L_{\alpha_r}^{-2}}{q_r-q_r^{-1}}.\]
\end{Def}

The algebra $U_q(\g;\SSS)$ is a particular example of algebras studied in \cite{DeC1}. It can be seen as a degeneration of $U_q(\g)$. Indeed, let $b\colon P\rightarrow \R^*_+$ be a (multiplicative) character on the weight lattice. Then upon writing $E'_r = b_{\alpha_r}E_r$, $F'_r = b_{\alpha_r}F_r$ and $L_{\beta}'=b_{\beta}^{-1}L_{\beta}$ in $U_q(\g)$, we see that \[\lbrack E_r',F_r'\rbrack = \frac{b_{\alpha_r}^4L_{\alpha_r'}'^{2}-L_{\alpha_r}'^{-2}}{q_r-q_r^{-1}},\] while the other commutation relations remain unchanged. Letting the appropriate $b_{\alpha_r}$ tend to zero, we obtain the commutation relations for $U_q(\g;\SSS)$.

Note that the above rescaling is invariant with respect to the adjoint action $\lhd$. It follows that there is a limit action of $U_q(\g)$ on $U_q(\g;\SSS)$, see~\cite{DeC1}. Namely, this action is characterized by the property that if $x\in U_q(\g;\SSS)$ and $y\in U_q(\bb^\pm)\subseteq U_q(\g)$, then $x\lhd y=S(y_{(1)})xy_{(2)}$.

We can now formulate our main result.

\begin{Theorem}\label{TheoHom} There is a surjective unital $^*$-homomorphism  \[\Psi\colon U_q(\g;\SSS)\rightarrow \Pol(\G_q/\K_{\SSS,q})_{\ext}\] such that $\Psi(L_{\omega}) = k_{\omega}$, $\Psi(E_r) = x_r^+$ and $\Psi(F_r)=x_r^-$ for all $\omega\in P$ and $r\in I$. This homomorphism has the following properties:
\begin{itemize}
\item[{\rm 1)}] the right action of $U_q(\g)$ on $\Pol(\G_q/\K_{\SSS,q})$ extends uniquely to an action on the algebra $\Pol(\G_q/\K_{\SSS,q})_{\ext}$ such that the homomorphism $\Psi$ becomes $U_q(\g)$-equivariant;
\item[{\rm 2)}] the subalgebra $U_q(\g;\SSS)_{\fin}\subseteq U_q(\g;\SSS)$ of locally finite vectors with respect to the action of $U_q(\g)$ is mapped onto $\Pol(\G_q/\K_{\SSS,q})$.
\end{itemize}
\end{Theorem}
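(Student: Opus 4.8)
My plan is to build $\Psi$ around its $U_q(\g)$-equivariance. The elements $x_r^\pm$ are tailored so that equivariance forces the formula: a short computation inside $U_q(\bb^+)\subseteq U_q(\g;\SSS)$ gives $(L_{-4\omega_r}\lhd E_r)L_{4\omega_r-\alpha_r}=(q_r^{-1}-q_r)E_r$, so the prescribed value $\Psi(E_r)=x_r^+$ is precisely what $\Psi(L_\mu)=k_\mu$ and equivariance demand. The plan is therefore: first extend the right $U_q(\g)$-action from $\Pol(\G_q/\K_{\SSS,q})$ to $\Pol(\G_q/\K_{\SSS,q})_{\ext}$; then check that $L_\omega\mapsto k_\omega$, $E_r\mapsto x_r^+$, $F_r\mapsto x_r^-$ respects the defining relations of $U_q(\g;\SSS)$, using equivariance to dispatch most of them; then obtain surjectivity from Proposition~\ref{pgen}; and finally identify the locally finite part. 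For the first point, note that $k_{-4\lambda}=q^{-(\rho-w\rho,\lambda)}U(\bar h_\lambda\otimes h_\lambda,v_\lambda)$ by \eqref{ek} is of $\lhd$-weight zero, so one may declare $k_\lambda\lhd L_\omega=k_\lambda$ and define $k_\lambda\lhd E_r$, $k_\lambda\lhd F_r$ by the formulas dictated by the adjoint action in $U_q(\g;\SSS)$ --- for instance $k_\lambda\lhd E_r=(1-q_rq^{-(\lambda+\alpha_r,\alpha_r)/2})k_{\lambda+\alpha_r}x_r^+$ with $x_r^+$ given by \eqref{explus} --- and then verify, via Lemma~\ref{lkbasic}, the module-algebra axioms, and \eqref{explus}, that this is consistent with the action already present on $k_{-4\lambda}=k_\lambda^{-4}\in\Pol(\G_q/\K_{\SSS,q})$ and yields the unique structure of a right $U_q(\g)$-module $^*$-algebra on $\Pol(\G_q/\K_{\SSS,q})_{\ext}$ extending the given one.

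Next one verifies the defining relations of $U_q(\g;\SSS)$. The torus relations $k_\chi k_\omega^{-1}=k_{\chi-\omega}$ hold by Lemma~\ref{lkbasic}(1) and the definition of $k_\lambda$, $\lambda\in P$, and $\Psi$ is $^*$-preserving by construction once it is a homomorphism. The relations $k_\mu x_r^\pm k_\mu^{-1}=q^{\pm(\mu,\alpha_r)/2}x_r^\pm$ follow from Lemma~\ref{lkbasic}(2): by \eqref{ek} and the computation preceding \eqref{explus}, $k_{-4\omega_r}\lhd E_r$ is a multiple of $U(\bar h_{\omega_r}\otimes F_rh_{\omega_r},v_{\omega_r})$, whose first argument is of weight $-\alpha_r$ and whose second argument $v_{\omega_r}$ is $U_q(\kk_\SSS)$-invariant; hence $k_{-4\mu}$ commutes with $k_{-4\omega_r}\lhd E_r$ up to the positive scalar $q^{-2(\mu,\alpha_r)}$, and taking fourth roots --- legitimate since the operators $k_\lambda$ are diagonal in the eigenbasis spanning $V_\SSS$ --- gives the commutation of $k_\mu$ with $x_r^+$, hence with $x_r^-=(x_r^+)^*$. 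For the quantum Serre relations I would use the equivariance established above together with the identity $E_r=(q_r^{-1}-q_r)^{-1}(L_{-4\omega_r}\lhd E_r)L_{4\omega_r-\alpha_r}$: a Serre monomial in the $x_r^+$, after collecting the $k$-factors to one side, becomes (up to an invertible $k$-factor) the image under the $U_q(\g)$-action of the corresponding Serre combination of the elements $L_{-4\omega}$, which vanishes because the Serre relations hold in $U_q(\bb^+)$; the $x_r^-$ case follows by applying $^*$.

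The main obstacle is the cross relation $[x_r^+,x_s^-]=\delta_{rs}\frac{\varepsilon_rk_{\alpha_r}^2-k_{\alpha_r}^{-2}}{q_r-q_r^{-1}}$, which I would verify inside $\End(V_\SSS)$ through the faithful representation $\theta_w$. Writing $x_r^\pm$ via \eqref{explus}, using Corollary~\ref{ccommut} to commute the operators $U(h_{\omega_r},h_{w^{-1}\omega_r})$ and $U(h_{\omega_r},h_{w^{-1}\omega_r})^*$ past the remaining matrix coefficients, the diagonality of $\theta_w(U(h_{\omega_r},h_{w^{-1}\omega_r}))$ from Proposition~\ref{psoibelman}(2), and the $\SU_{q_r}(2)$-relations (with $\theta(b)e_n=q_r^ne_n$, etc.), one finds that $x_r^+x_r^--x_r^-x_r^+$ collapses to a diagonal operator. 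The parameter $\varepsilon_r$ enters exactly here: $\varepsilon_r=0$ means $\bar\alpha_r\notin\SSS$, equivalently $w_0\omega_r$ is $W_\SSS$-fixed, equivalently $h_{w^{-1}\omega_r}$ is a lowest weight vector of $V_{\omega_r}$, so that $v_{\omega_r}=\bar h_{w^{-1}\omega_r}\otimes h_{w^{-1}\omega_r}$ with no correction terms in \eqref{einvvect}; then one of the two products in the commutator degenerates and only $-k_{\alpha_r}^{-2}/(q_r-q_r^{-1})$ survives, whereas for $\varepsilon_r=1$ the correction terms in \eqref{einvvect} contribute the extra summand $k_{\alpha_r}^2/(q_r-q_r^{-1})$. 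For $r\ne s$ the commutator vanishes by the same $R$-matrix bookkeeping as in Corollary~\ref{ccommut}. I expect this to be the longest and most delicate part of the argument.

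Finally, surjectivity: the image of $\Psi$ contains every $k_\lambda=\Psi(L_\lambda)$, and, being a $U_q(\g)$-submodule by equivariance, also contains $\Psi(L_{-4\lambda})\lhd U_q(\g)=k_{-4\lambda}\lhd U_q(\g)$ for each $\lambda\in P^+$; by Proposition~\ref{pgen} these submodules already span $\Pol(\G_q/\K_{\SSS,q})$, so together with the $k_\lambda$ they generate $\Pol(\G_q/\K_{\SSS,q})_{\ext}$. Part~(1) is built into the construction, and the uniqueness of the extended action follows because $\Pol(\G_q/\K_{\SSS,q})_{\ext}$ is generated over $\Pol(\G_q/\K_{\SSS,q})$ by the $\lhd$-weight-zero operators $k_\lambda$. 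For part~(2): since $\Psi$ is equivariant it maps $U_q(\g;\SSS)_{\fin}$ into the locally finite part of $\Pol(\G_q/\K_{\SSS,q})_{\ext}$; conversely, adapting the Joseph--Letzter description of the locally finite part (which is unaffected by the degeneration, since the $\lhd$-action restricted to each $U_q(\bb^\pm)$ is unchanged) one sees that $U_q(\g;\SSS)_{\fin}$ is spanned by the elements $L_{-4\lambda}\lhd y$ with $\lambda\in P^+$ and $y\in U_q(\g)$, whose $\Psi$-images span $\sum_{\lambda\in P^+}k_{-4\lambda}\lhd U_q(\g)=\Pol(\G_q/\K_{\SSS,q})$ by Proposition~\ref{pgen}; hence $\Psi(U_q(\g;\SSS)_{\fin})=\Pol(\G_q/\K_{\SSS,q})$.
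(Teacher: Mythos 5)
Your overall architecture is the same as the paper's (the elements $x_r^\pm$, the adjoint-action identity, Serre relations via the $L_{-4\omega_s}$-trick, surjectivity from Proposition~\ref{pgen}, and the locally finite part via the generation of $U_q(\g;\SSS)_{\fin}$ by the $L_{-4\lambda}$), but the one step you yourself flag as "the longest and most delicate part" is exactly where your sketch has a genuine gap: the determination of $\varepsilon_r$ in $[x_r^+,x_r^-]=\frac{\varepsilon_rk_{\alpha_r}^2-k_{\alpha_r}^{-2}}{q_r-q_r^{-1}}$. You assert that when $\bar\alpha_r\in\SSS$ "the correction terms in \eqref{einvvect} contribute the extra summand $k_{\alpha_r}^2/(q_r-q_r^{-1})$", but nothing in your plan explains why this contribution is a scalar multiple of $k_{\alpha_r}^2$ at all, nor why the scalar is exactly $1$ rather than some other nonzero constant. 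For general $\SSS$ the invariant vector $v_{\omega_r}$ involves the whole $U_q(\kk_\SSS)$-module generated by $h_{w_0\omega_r}$, with nontrivial coefficients, and a direct expansion via Corollary~\ref{ccommut} and the $\SU_{q_r}(2)$ relations does not visibly collapse. The paper handles this in two separate moves that your sketch omits: first, the operator $k_{\alpha_r}^{-2}[x_r^+,x_r^-]+(q_r-q_r^{-1})^{-1}k_{\alpha_r}^{-4}$ is shown to be central (using that the honest relation $[E_r,F_r]$ holds for the adjoint action on $\Pol(\G_q/\K_{\SSS,q})$), and irreducibility of $\theta_w$ then forces the commutator to have the stated form with an unknown real $\varepsilon_r$; second, the value of $\varepsilon_r$ is pinned down by an $R$-matrix reduction, an explicit Soibelman-type computation in the rank-one case $\SSS=\{\bar\alpha_r\}$, and a reduction of general $\SSS$ to that case via the equivalence $\varepsilon_r=1\iff x_r^-e_0^{\otimes l(w)}=0$, which is ultimately a statement about $\K_{\SSS,q}$ alone. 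Your proposal contains no substitute for either move, so the cross relation --- hence the existence of $\Psi$ on all of $U_q(\g;\SSS)$ --- is not established.

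Two smaller points. Your Serre-relation argument is misstated: an individual monomial $x_r^{+k}x_s^+x_r^{+(1-a_{rs}-k)}$ is \emph{not}, up to $k$-factors, of the form $k_{-4\omega}\lhd(E_r^kE_sE_r^{1-a_{rs}-k})$, since iterating the adjoint-action formula produces sums over both orderings; what can be salvaged (and is what the paper does) is the aggregate statement, using that the ideal of $\tilde U_q(\bb)$ generated by the Serre elements coincides with the one generated by $L_{-4\omega_s}\lhd(E_sE_r^{1-a_{rs}})$, together with the vanishing of $k_{-4\omega_s}\lhd(E_sE_r^{1-a_{rs}})$, which rests on the specific fact $F_rh_{\omega_s}=0$ (equivalently $k_{-4\omega_s}\lhd E_r=0$ for $r\ne s$) and not merely on "the Serre relations holding in $U_q(\bb^+)$". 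Also, your ordering "extend the action first, then verify the relations" conceals that the extension of the action and the relations for $x_r^\pm$ are intertwined (the paper notes the extension is essentially the content of its Lemmas on the homomorphism and the centrality); this can be organized your way, but the verification that the $[E_r,F_r]$-relation of $U_q(\g)$ is respected on $\Pol(\G_q/\K_{\SSS,q})_{\ext}$ again leans on the commutation properties of $x_r^\pm$ that you have not yet proved at that stage.
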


To put this differently, if we denote by $V_\SSS$ the unique irreducible highest weight $U_q(\g;\SSS)$-module with highest weight $0$ (see~\cite{DeC1}), then the image of $U_q(\g;\SSS)_{\fin}$ in $\End(V_\SSS)$ is $U_q(\g)$-equiva\-riantly isomorphic to $\Pol(\G_q/\K_{\SSS,q})$.

\subsection{Proof of the theorem}

We will prove the theorem in a series of steps. It will be convenient to use the following definition.

\begin{Def}
We say that an operator $a\in\End(V_\SSS)$ is of weight $\omega$ if $k_\lambda a k_{-\lambda}=q^{(\lambda,\omega)/2}a$ for all $\lambda\in P$.
\end{Def}

Then an equivalent formulation of Lemma~\ref{lkbasic}(2) is that if $a\in \Pol(\G_q/\K_{\SSS,q})$ is  such that $a\lhd L_\lambda=q^{-(\lambda,\omega)/2}a$ for all $\lambda\in P$, then $a$ is of weight $\omega$.

\begin{Lem}\label{LemCome} For all $a\in \Pol(\G_q/\K_{\SSS,q})$, we have \begin{equation}\label{EqEr} a\lhd E_r = -q_rx_r^+ak_{\alpha_r} + k_{\alpha_r}ax_r^+.\end{equation} Similarly, \begin{equation}\label{EqFr} a\lhd F_r = -q_r^{-1}x_r^-ak_{\alpha_r} + k_{\alpha_r}ax_r^-.\end{equation}
\end{Lem}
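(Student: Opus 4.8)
The plan is to prove the two identities \eqref{EqEr} and \eqref{EqFr} by a direct computation using the definition of $x_r^+$ and the module-algebra property of the adjoint action. Recall that $\Pol(\G_q/\K_{\SSS,q})$ is a right $U_q(\g)$-module algebra, so for $a,b\in\Pol(\G_q/\K_{\SSS,q})$ we have $(ab)\lhd E_r=(a\lhd E_{r,(1)})(b\lhd E_{r,(2)})$ with $\Delta(E_r)=E_r\otimes L_{\alpha_r}+L_{\alpha_r}^{-1}\otimes E_r$, hence $(ab)\lhd E_r=(a\lhd E_r)(b\lhd L_{\alpha_r})+(a\lhd L_{\alpha_r}^{-1})(b\lhd E_r)$. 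The natural candidate for $b$ is an element whose $\lhd E_r$-image recovers $x_r^+$; by definition $x_r^+=(q_r^{-1}-q_r)^{-1}(k_{-4\omega_r}\lhd E_r)k_{4\omega_r-\alpha_r}$, so I would work with $b=k_{-4\omega_r}$ and exploit the multiplicativity of $k$ from Lemma~\ref{lkbasic}(1).

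Concretely, the first step is to compute $a\lhd E_r$ by writing $a=(ak_{4\omega_r})k_{-4\omega_r}$, applying the module-algebra identity to the product $(ak_{4\omega_r})\cdot k_{-4\omega_r}$, and then using that $k_{-4\omega_r}$ has weight $-4\omega_r$ so that $k_{-4\omega_r}\lhd L_{\alpha_r}=q^{(\alpha_r,4\omega_r)/2}k_{-4\omega_r}=q_r^{2(\alpha_r^\vee,2\omega_r)/2\cdot\dots}$ — more precisely $k_{-4\omega_r}\lhd L_\lambda=q^{-(\lambda,-4\omega_r)/2}k_{-4\omega_r}$ in the convention preceding the lemma, and similarly for $k_{4\omega_r}\lhd L_\lambda$. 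One must also track how $\lhd L_{\alpha_r}^{-1}$ and $\lhd L_{\alpha_r}$ act on $a$ and on the various $k$'s. The term $(ak_{4\omega_r}\lhd E_r)(k_{-4\omega_r}\lhd L_{\alpha_r})$ will need to be re-expanded once more, splitting $ak_{4\omega_r}$, until everything is expressed in terms of $a\lhd E_r$ (which cancels) and of $k_{-4\omega_r}\lhd E_r=(q_r^{-1}-q_r)x_r^+k_{-4\omega_r+\alpha_r}$. Collecting the scalar powers of $q$ and using $k_\lambda k_\mu=k_{\lambda+\mu}$ and the weight covariance $k_\lambda(k_{-4\omega_r}\lhd E_r)k_{-\lambda}=q^{(\lambda,\alpha_r-4\omega_r)/2}(k_{-4\omega_r}\lhd E_r)$ to move $k$-factors past each other, the claimed identity $a\lhd E_r=-q_rx_r^+ak_{\alpha_r}+k_{\alpha_r}ax_r^+$ should drop out after the $q$-exponents match.

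The identity \eqref{EqFr} for $F_r$ follows by applying the $^*$-operation to \eqref{EqEr}, or equivalently by running the same computation with $\Delta(F_r)=F_r\otimes L_{\alpha_r}^{-1}+L_{\alpha_r}\otimes F_r$ and $x_r^-=(x_r^+)^*$; I would simply remark that it is obtained by taking adjoints, being careful that $(a\lhd E_r)^*=a^*\lhd F_r$ up to the appropriate sign/scalar coming from $E_r^*=F_r$ and the compatibility of $\lhd$ with $*$, so that one must first check $a\lhd F_r=(a^*\lhd E_r)^*$ holds in the form needed and that $k_{\alpha_r}^*=k_{\alpha_r}$.

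The main obstacle I anticipate is purely bookkeeping: getting every power of $q$ (equivalently of $q_r=q^{(\alpha_r,\alpha_r)/2}$) right across the several re-expansions, since $(\alpha_r,4\omega_r)/2=(\alpha_r,\alpha_r)(\alpha_r^\vee,2\omega_r)/2=2(\alpha_r,\alpha_r)/2\cdot\dots$ must be reconciled with the $q_r^{\pm1}$ and $k_{\alpha_r}$ appearing in the definition of $x_r^+$. There is no conceptual difficulty — the module-algebra axiom plus multiplicativity of $k_\lambda$ does all the work — but the constant $(q_r^{-1}-q_r)^{-1}$ and the shift $4\omega_r-\alpha_r$ in the definition of $x_r^+$ make the intermediate expressions bulky, so care is required to see the spurious $a\lhd E_r$ term cancel and the two surviving terms assemble with the correct coefficients $-q_r$ and $1$.
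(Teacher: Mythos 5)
There are two genuine gaps in your plan. First, the decomposition $a=(ak_{4\omega_r})k_{-4\omega_r}$ cannot be fed into the module-algebra identity: $ak_{4\omega_r}$ involves the inverted element $k_{4\omega_r}$ and so lives in $\Pol(\G_q/\K_{\SSS,q})_{\ext}$, not in $\Pol(\G_q)$, and at this stage the right action $\lhd$ of $U_q(\g)$ is defined only on matrix coefficients; its extension to $\Pol(\G_q/\K_{\SSS,q})_{\ext}$ is exactly part (1) of Theorem~\ref{TheoHom}, whose proof uses the present lemma. So your very first expansion $\bigl((ak_{4\omega_r})k_{-4\omega_r}\bigr)\lhd E_r=\dots$ is undefined, hence circular. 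Second, and more fundamentally, the module-algebra axiom plus $k_\lambda k_\mu=k_{\lambda+\mu}$ cannot ``do all the work'': identity \eqref{EqEr} interleaves $a$ with $k_{\alpha_r}$ and $x_r^+$ on both sides, so you must know how the operators $k_\lambda$ commute with a general $a\in\Pol(\G_q/\K_{\SSS,q})$. That input is precisely Lemma~\ref{lkbasic}(2) (equivalently Corollary~\ref{ccommut}, which rests on the $R$-matrix computation modulo $\ker\theta_w$), and it never appears in your argument — you only invoke part (1). Relatedly, your weight bookkeeping is wrong: $k_{-4\omega_r}$ is, up to a scalar, the matrix coefficient $U(\bar h_{\omega_r}\otimes h_{\omega_r},v_{\omega_r})$ of weight-zero vectors, so $k_{-4\omega_r}\lhd L_\lambda=k_{-4\omega_r}$ (not $q^{2(\lambda,\omega_r)}k_{-4\omega_r}$ as your convention would give) and it commutes with every $k_\lambda$; consequently $k_{-4\omega_r}\lhd E_r$ has conjugation weight $\alpha_r$, not $\alpha_r-4\omega_r$. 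With these exponents the cancellations you hope for would not occur.

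The viable route is close to what you attempt but stays inside $\Pol(\G_q/\K_{\SSS,q})$. Reduce \eqref{EqFr} to \eqref{EqEr} by applying $*$, as you propose (using $(x\lhd y)^*=x^*\lhd S(y)^*$; the factor $-q_r$ from $S(E_r)=-q_rE_r$ accounts for the change of coefficient). Then assume $a$ satisfies $a\lhd L_\lambda=q^{-(\lambda,\omega)/2}a$, and use Lemma~\ref{lkbasic}(2) to get $k_\lambda a k_{-\lambda}=q^{(\lambda,\omega)/2}a$ and likewise the conjugation behaviour of $a\lhd E_r$ and $x_r^+$. Multiplying the desired identity \eqref{EqEr} on the right by $k_{-4\omega_r}$ and moving $k$-factors with these rules, it becomes an identity involving only $a$, $a\lhd E_r$, $k_{-4\omega_r}$ and $k_{-4\omega_r}\lhd E_r$; this identity is then obtained by applying $\lhd E_r$ to the valid relation $k_{-4\omega_r}a-q^{-2(\omega_r,\omega)}ak_{-4\omega_r}=0$ and expanding with $(bc)\lhd x=(b\lhd x_{(1)})(c\lhd x_{(2)})$ — here both factors of each product lie in $\Pol(\G_q/\K_{\SSS,q})$, so every term is defined. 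Without invoking Lemma~\ref{lkbasic}(2) (and without restricting to $\lhd L_\lambda$-weight vectors $a$), the computation you outline cannot close.
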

\begin{proof} As $(x\lhd y)^* = x^* \lhd S(y)^*$ for all $y\in U_q(\g)$, and as the elements $k_{\alpha_r}$ are self-adjoint, it is sufficient to prove \eqref{EqEr}.

Let $a\in \Pol(\G_q/\K_{\SSS,q})$ be  such that $a\lhd L_\lambda=q^{-(\lambda,\omega)/2}a$ for all $\lambda\in P$, so that $a$ is of weight~$\omega$. Then $a\lhd E_r$ is of weight $\omega+\alpha_r$. In particular, $x_r^+$ is of weight $\alpha_r$. Recalling the definition of $x^+_r$ we conclude, upon multiplying by $k_{-4\omega_r}$ on the right, that \eqref{EqEr} is equivalent to
\[-q^{\frac{1}{2}(4\omega_r-\alpha_r,\omega)}(k_{-4\omega_r}\lhd E_r)a + q^{\frac{1}{2}(\alpha_r,\omega)}a(k_{-4\omega_r}\lhd E_r) = (q_r^{-2}-1)(a\lhd E_r)k_{-4\omega_r}.\] But this identity follows from $(k_{-4\omega_r}a-q^{-2(\omega_r,\omega)}ak_{-4\omega_r})\lhd E_r=0$ using that $(bc)\lhd x = (b\lhd x_{(1)})(c\lhd x_{(2)})$.
\end{proof}

\begin{Lem} \label{LemHom} There is a unital homomorphism \[\Psi\colon U_q(\bb)\rightarrow \Pol(\G_q/\K_{\SSS,q})_{\ext}\] such that $\Psi(L_{\omega}) = k_{\omega}$ and $\Psi(E_r) = x_r^+$ for all $\omega\in P$ and $r\in I$.
\end{Lem}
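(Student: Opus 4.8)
To prove Lemma~\ref{LemHom} I need to check that the assignments $L_\omega\mapsto k_\omega$, $E_r\mapsto x_r^+$ extend to a unital homomorphism on $U_q(\bb^+)$, i.e.\ that the images satisfy the defining relations of $U_q(\bb^+)$: the torus relations $k_\chi k_\omega^{-1}=k_{\chi-\omega}$, the commutation relations $k_\omega x_r^+ k_\omega^{-1}=q^{(\omega,\alpha_r)/2}x_r^+$, and the quantum Serre relations among the $x_r^+$. The first is immediate from the definition of $k_\lambda$ for $\lambda\in P$ (it is built multiplicatively). The second is exactly the statement that $x_r^+$ is an operator of weight $\alpha_r$, which was already observed inside the proof of Lemma~\ref{LemCome}: since $x_r^+$ is (a multiple of) $(k_{-4\omega_r}\lhd E_r)k_{4\omega_r-\alpha_r}$ and the adjoint action of $E_r$ raises weights by $\alpha_r$, Lemma~\ref{lkbasic}(2) translated into the ``weight $\omega$'' language gives $k_\lambda x_r^+ k_{-\lambda}=q^{(\lambda,\alpha_r)/2}x_r^+$.

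The real content is the quantum Serre relations. Here the strategy is to exploit the $U_q(\g)$-equivariance rather than compute directly. The key mechanism is Lemma~\ref{LemCome}: equation~\eqref{EqEr} expresses the adjoint action of $E_r$ on any $a\in\Pol(\G_q/\K_{\SSS,q})$ as $a\lhd E_r=-q_r x_r^+ a k_{\alpha_r}+k_{\alpha_r}a x_r^+$. I would rewrite this as saying that, modulo the invertible twist by $k_{\alpha_r}$, left multiplication by $x_r^+$ and the adjoint action $\lhd E_r$ differ only by a right-multiplication term. Concretely, set $\tilde E_r(a):=k_{-\alpha_r}(a\lhd E_r)k_{-\alpha_r}$; then $\tilde E_r(a)=-q_r k_{-2\alpha_r}x_r^+ k_{2\alpha_r}\cdot a\cdot k_{-2\alpha_r}\dots$ — more cleanly, one checks that the operator ``left multiplication by $x_r^+$'' on $\Pol(\G_q/\K_{\SSS,q})_{\ext}$ can be written as a combination of the adjoint operator $\lhd E_r$ (suitably conjugated by $k$'s) and right multiplications by elements of weight $\alpha_r$, all of which mutually commute in the appropriate sense. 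Since the operators $\lhd E_r$ on $\Pol(\G_q/\K_{\SSS,q})$ satisfy the quantum Serre relations (because $\Pol(\G_q/\K_{\SSS,q})$ is a $U_q(\g)$-module and the Serre relations hold in $U_q(\bb^+)$), and since right multiplications by weight-$\alpha_r$ elements also ``Serre-commute'' with each other in the same graded sense, a bookkeeping argument with the weight grading should transport the Serre relations to the $x_r^+$. Alternatively — and this is probably the cleaner route to write — one notes that $\Psi$ is forced to be equivariant for the $U_q(\g)$-action (since $\Pol(\G_q/\K_{\SSS,q})_{\ext}$ carries one and $x_r^+$, $k_\omega$ have the right weights), applies $\Psi$ to the identity $L_{-4\omega_s}\lhd(E_sE_r^{1-a_{rs}})=0$ in $U_q(\bb^+)$, and uses that $\Psi$ intertwines $\lhd$ with the module action: the Serre element, being in the kernel of the defining quotient, maps to something that must vanish because its image is obtained by applying a module-action operator (which annihilates the relevant vector) — but to make this precise one first needs $\Psi$ to be a homomorphism, so this is circular unless set up carefully. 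Hence I expect to proceed via the first route: derive the Serre relations for the $x_r^+$ as a formal consequence of~\eqref{EqEr} together with the Serre relations for the operators $\lhd E_r$ acting on $\Pol(\G_q/\K_{\SSS,q})$.

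The main obstacle is precisely this last point: massaging~\eqref{EqEr} into a form where the Serre relations among the $\lhd E_r$ (which we know hold, since $U_q(\bb^+)$ acts on $\Pol(\G_q/\K_{\SSS,q})$) force the Serre relations among the left-multiplication operators $L_{x_r^+}$. The subtlety is that $L_{x_r^+}$ is not simply $\lhd E_r$ but involves an extra $k_{\alpha_r}$-twist and a term $k_{\alpha_r}a x_r^+$ that mixes left and right multiplication; one must check these extra pieces interact correctly with the $q$-commutator structure in the $1-a_{rs}$-fold expression. A convenient way to organize the computation is to introduce, for each $r$, the operator $D_r$ on $\Pol(\G_q/\K_{\SSS,q})_{\ext}$ given by $D_r(a)=k_{\alpha_r}^{-1}(a\lhd E_r)$, verify from~\eqref{EqEr} that $D_r(a)=-q_r k_{-\alpha_r}x_r^+ k_{\alpha_r}\cdot a + a\cdot(\text{wt }\alpha_r\text{ term})$ — i.e.\ $D_r$ is a twisted derivation relating $L_{x_r^+}$ and a right-multiplication operator — and then note that since the $D_r$ (being essentially the module action of $E_r$ up to an invertible twist) satisfy Serre relations, and right-multiplication operators commute with left-multiplication operators, one can solve for $L_{x_r^+}$ and conclude. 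Once the homomorphism on $U_q(\bb^+)$ is established, the analogous statement for $U_q(\bb^-)$ follows by applying the $*$-operation, and the full Theorem~\ref{TheoHom} is then assembled by checking the single cross-relation $[E_r,F_s]=\delta_{rs}(\varepsilon_r L_{\alpha_r}^2-L_{\alpha_r}^{-2})/(q_r-q_r^{-1})$ on the images, which is where the degeneration parameter $\varepsilon_r$ enters.
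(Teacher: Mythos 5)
Your reduction of the lemma to the quantum Serre relations for the $x_r^+$ is right, and the torus/weight relations are handled correctly, but the Serre relations themselves — the entire content of the lemma — are not actually proved in your proposal. Your preferred ``first route'' asserts that the Serre relations for the operators $\lhd E_r$ on $\Pol(\G_q/\K_{\SSS,q})$ can be formally transported through \eqref{EqEr} to the elements $x_r^+$, but the bookkeeping you gesture at does not close: solving \eqref{EqEr} for left multiplication by $x_r^+$ leaves a term involving \emph{right} multiplication by $x_r^+$, and the Serre relations for the right-multiplication operators $R_{x_r^+}$ are equivalent to the relations you are trying to establish, so that route is itself circular. Moreover \eqref{EqEr} is only available for $a\in\Pol(\G_q/\K_{\SSS,q})$; the action $\lhd$ has not (yet) been extended to $\Pol(\G_q/\K_{\SSS,q})_{\ext}$ at this stage, so the operator calculus you invoke on the extended algebra is not yet defined. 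No counterexample is needed to see the gap: the claim that the relations ``should'' follow formally is never substantiated, and the hard step is exactly there.

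The route you dismiss as circular is in fact the paper's proof, and the missing idea is how to set it up non-circularly: $U_q(\bb)$ is \emph{defined} as the quotient of $\tilde{U}_q(\bb)$ by the adjoint-action form \eqref{EqAdNil} of the Serre relations. One first observes that $\Psi$ is well defined on $\tilde{U}_q(\bb)$ (only the weight relations are needed), uses it to define $x\blacktriangleleft y=\Psi(S(y_{(1)}))x\Psi(y_{(2)})$, and checks via Lemma~\ref{LemCome} that $\blacktriangleleft$ agrees on $\Pol(\G_q/\K_{\SSS,q})$ with the genuine $U_q(\g)$-action $\lhd$ for the generators, hence for all of $\tilde{U}_q(\bb)$. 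Then $\Psi(L_{-4\omega_s}\lhd E_sE_r^{1-a_{rs}})=k_{-4\omega_s}\lhd E_sE_r^{1-a_{rs}}$, and this vanishes by a concrete computation you never identify: by \eqref{ek}, $k_{-4\omega_s}$ is a matrix coefficient $U(\bar h_{\omega_s}\otimes h_{\omega_s},v_{\omega_s})$ up to a scalar, so the element in question is a multiple of $U(\bar h_{\omega_s}\otimes F_r^{1-a_{rs}}F_sh_{\omega_s},v_{\omega_s})$, and $F_r^{1-a_{rs}}F_sh_{\omega_s}=0$ because $F_rh_{\omega_s}=0$ for $r\ne s$ and the Serre relation holds in the honest $U_q(\g)$-module $V_{\omega_s}$. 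Thus the Serre relations for the $x_r^+$ are imported from the module $V_{\omega_s}$, not derived formally from \eqref{EqEr}; without this (or some substitute argument carried out in full) your proposal does not establish the lemma.
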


\begin{proof} Since $x^+_r$ is of weight $\alpha_r$, it is clear that $\Psi$ is well-defined as a homomorphism of~$\tilde{U}_q(\bb)$. It follows that we can define a right action of $\tilde{U}_q(\bb)$ on $\Pol(\G_q/\K_{\SSS,q})_{\ext}$ by \[x \blacktriangleleft y = \Psi(S(y_{(1)}))x\Psi(y_{(2)}).\]

Write $\pi\colon  \tilde{U}_q(\bb) \rightarrow U_q(\bb)$ for the quotient map. From Lemma~\ref{LemCome} we easily infer that \[x\lhd \pi(y) = x \blacktriangleleft y\] for $x\in \Pol(\G_q/\K_{\SSS,q})$ and $y\in \{L_{\omega},E_r\}\subseteq \tilde{U}_q(\bb)$. It then follows that this identity holds for arbitrary $y$. In particular, by definition~\eqref{ek} of $k_{-4\omega_s}$ we have that for $r\neq s$, up to scalar factors, \begin{eqnarray*}
\Psi(L_{-4\omega_s}\lhd E_sE_r^{1-a_{rs}}) &=& k_{-4\omega_s}\lhd E_sE_r^{1-a_{rs}}\\&=& U\left(F_r^{1-a_{rs}}F_s(\bar h_{\omega_s}\otimes h_{\omega_s}),v_{\omega_s}\right).
\end{eqnarray*}
But $F_r^{1-a_{rs}}F_s(\bar h_{\omega_s}\otimes h_{\omega_s})$ is a multiple of $\bar h_{\omega_s}\otimes (F_r^{1-a_{rs}}F_sh_{\omega_s})$. As $F_rh_{\omega_s}=0$, we have \[F_r^{1-a_{rs}}F_sh_{\omega_s} = (-1)^{1-a_{rs}}  \sum_{k=0}^{1-a_{rs}}  (-1)^k \bqn{1-a_{rs}}{k}{r} F_r^kF_sF_r^{1-a_{rs}-k}h_{\omega_s} = 0.\] Hence $\Psi$ preserves the Serre relations and descends to $U_q(\bb)$.
\end{proof}

Since $x_r^-=(x_r^+)^*$, this also gives the following. 

\begin{Cor} There is a unital homomorphism \[\Psi\colon  U_q(\bb^-)\rightarrow \Pol(\G_q/\K_{\SSS,q})_{\ext}\] such that $\Psi(L'_{\omega}) = k_{-\omega}$ and $\Psi(F_r) = x_r^{-}$.
\end{Cor}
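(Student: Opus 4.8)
The plan is to deduce the statement from Lemma~\ref{LemHom} by transporting that homomorphism through the $^*$-operations on source and target, which is exactly what the sentence ``this also gives the following'' is pointing at. Recall that the $^*$-operation on $U_q(\g)$ is a conjugate-linear anti-automorphism with $E_r^*=F_r$ and $L_\omega^*=L_\omega$; since the inclusions $U_q(\bb^\pm)\hookrightarrow U_q(\g)$ are faithful, it restricts to a conjugate-linear anti-isomorphism $U_q(\bb^-)\to U_q(\bb^+)$ carrying $F_r$ to $E_r$ and $L'_\omega=L_{-\omega}$ to $L_{-\omega}$. On the other side, $\Pol(\G_q/\K_{\SSS,q})_{\ext}$ is a $^*$-subalgebra of $\End(V_\SSS)$, hence it carries its own conjugate-linear anti-automorphism $^*$.

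First I would define $\Psi\colon U_q(\bb^-)\to\Pol(\G_q/\K_{\SSS,q})_{\ext}$ by $\Psi(x)=\Psi(x^*)^*$, where on the right-hand side $\Psi$ denotes the homomorphism of Lemma~\ref{LemHom}. As the composite of two conjugate-linear anti-multiplicative maps with a single $\C$-linear multiplicative map in between, this is $\C$-linear and multiplicative, i.e.\ a unital algebra homomorphism; no separate verification of the Serre relations is needed, since $^*$ is already globally defined on $U_q(\g)$. It then remains only to evaluate it on the generators: $\Psi(F_r)=\Psi(F_r^*)^*=\Psi(E_r)^*=(x_r^+)^*=x_r^-$, and $\Psi(L'_\omega)=\Psi((L_{-\omega})^*)^*=\Psi(L_{-\omega})^*=k_{-\omega}^*=k_{-\omega}$, where the last equality uses that the operators $k_\lambda$ are self-adjoint.

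I do not expect a genuine obstacle here; the only point that needs a little care is the parity bookkeeping --- keeping track of which of the three maps in the composite is (anti-)linear and which is (anti-)multiplicative --- together with the sign conventions $L'_\omega=L_{-\omega}$ and $k_{-\omega}=k_\omega^{-1}$. Should one wish to avoid the $^*$-trick altogether, an alternative but strictly longer route is to rerun Lemmas~\ref{LemCome} and~\ref{LemHom} verbatim with $E_r$ replaced by $F_r$ throughout, checking directly that $x_r^-$ and $k_{-\omega}$ satisfy the defining relations and the quantum Serre relations of $U_q(\bb^-)$.
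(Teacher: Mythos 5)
Your proposal is correct and is exactly the paper's (one-line) argument: the corollary is obtained from Lemma~\ref{LemHom} by transporting through the $^*$-operations, using $x_r^-=(x_r^+)^*$, $L'_\omega=L_{-\omega}$, $L_\omega^*=L_\omega$ and the self-adjointness of the $k_\lambda$. Your generator computations and the linearity/multiplicativity bookkeeping are all accurate, so nothing is missing.
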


To show that $\Psi\colon U_q(\bb^\pm)\to\Pol(\G_q/\K_{\SSS,q})_{\ext}$ defines a homomorphism of $U_q(\g;\SSS)$ it remains to find the commutation relations between the elements $x^+_r$ and $x_s^-$.

\begin{Lem} For $r\neq s$, we have $\lbrack x_r^+,x_s^-\rbrack =0$.\end{Lem}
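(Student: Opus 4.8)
The plan is to express everything in terms of the matrix coefficients $U(\xi,\eta)$ and use the commutation relations already established, reducing the claim to the fact that $x_r^+$ has weight $\alpha_r$ and $x_s^-$ has weight $-\alpha_s$, together with the specific form of the generators. First I would recall that by \eqref{explus} the element $x_r^+$ is, up to a scalar and a power of $k$, the operator $\theta_w\bigl(U(h_{\omega_r},h_{w^{-1}\omega_r})^* U(F_rh_{\omega_r},h_{w^{-1}\omega_r})\bigr)k_{4\omega_r-\alpha_r}$, and similarly $x_s^-=(x_s^+)^*$ involves $U(F_sh_{\omega_s},h_{w^{-1}\omega_s})^*U(h_{\omega_s},h_{w^{-1}\omega_s})k_{-4\omega_s+\alpha_s}$. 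So I would move all the $k$'s to one side using the weight-commutation relations (Lemma~\ref{lkbasic}(2) and the definition of weight), collecting the scalar $q$-powers, and reduce $[x_r^+,x_s^-]=0$ to a commutation statement, modulo $\ker\theta_w$, between the four matrix coefficients $U(F_rh_{\omega_r},h_{w^{-1}\omega_r})$, $U(h_{\omega_r},h_{w^{-1}\omega_r})$ and their adjoints for the index $s$.

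The key mechanism is Corollary~\ref{ccommut}: for any weight vectors $\xi,\eta$ in a type~$I$ module, $U(\xi,\eta)$ and $U(\xi',\eta')^*$ commute with $U(h_\lambda,h_{w^{-1}\lambda})$ up to an explicit scalar $q$-power depending only on the weights, modulo $\ker\theta_w$. Applying this with $\lambda=\omega_s$ (so $U(h_{\omega_s},h_{w^{-1}\omega_s})$ and its adjoint are the ``central-like'' factors in $x_s^-$) and with $\xi=F_rh_{\omega_r}$, $\eta=h_{w^{-1}\omega_r}$, I can push $U(F_rh_{\omega_r},h_{w^{-1}\omega_r})$ and $U(h_{\omega_r},h_{w^{-1}\omega_r})^*$ past the factors of $x_s^-$, again picking up $q$-power scalars governed by the weights $\wt(F_rh_{\omega_r})=\omega_r-\alpha_r$, $\wt(h_{w^{-1}\omega_r})=w^{-1}\omega_r$, and $\omega_s$. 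The point is then to check that all the accumulated $q$-powers cancel: the contributions from $x_r^+x_s^-$ and $x_s^-x_r^+$ must agree. This is where the hypothesis $r\neq s$ enters — it ensures that the ``diagonal'' correction term in $\RR_{21}$ (the $L_{\alpha_r}E_r\otimes L_{-\alpha_r}F_r$ piece of $\tilde\RR$) which would otherwise produce a genuine $[E_r,F_r]$-type commutator lands in weight spaces that vanish or get killed by $\theta_w$; for $r\ne s$ there is no such obstruction and Corollary~\ref{ccommut} applies cleanly on the nose.

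More concretely, I expect the cleanest route is: write $x_r^+ = c_r\,A_r^* B_r\, k_{4\omega_r-\alpha_r}$ and $x_s^- = \bar c_s\, B_s^* A_s\, k_{-4\omega_s+\alpha_s}$ where $A_t=U(h_{\omega_t},h_{w^{-1}\omega_t})$, $B_t=U(F_th_{\omega_t},h_{w^{-1}\omega_t})$, all taken mod $\ker\theta_w$, and $c_r$ is the scalar in \eqref{explus}. Then compute $x_r^+x_s^-$ by sliding $k_{4\omega_r-\alpha_r}$ to the far right (it commutes with everything up to weight-scalars since $B_s^*,A_s$ have definite weights) and sliding $A_r^*,B_r$ past $B_s^*,A_s$ using Corollary~\ref{ccommut} applied with $\lambda=\omega_s$; do the symmetric computation for $x_s^-x_r^+$ using Corollary~\ref{ccommut} with $\lambda=\omega_r$. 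Each slide contributes a scalar of the form $q^{\pm(\omega_s,\ \cdot\ )}$ or $q^{\pm(\omega_r,\ \cdot\ )}$ with the second slot being one of $\wt(\xi)-w\wt(\eta)$ for the relevant pair; summing them and comparing the two orders, one finds the total scalars coincide, hence $x_r^+x_s^- = x_s^-x_r^+$ mod $\ker\theta_w$, and since $\theta_w$ is faithful on $\Pol(\G_q/\K_{\SSS,q})_{\ext}\subseteq\End(V_\SSS)$ we are done.

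The main obstacle I anticipate is purely bookkeeping: correctly tracking every $q$-power — from the normalization scalars $c_r,c_s$, from the three or four applications of Corollary~\ref{ccommut}, and from commuting the $k$-operators past matrix coefficients of definite weight — and verifying they cancel. One must also be slightly careful that Corollary~\ref{ccommut} is being applied to $\eta$ a genuine weight vector (it is: $h_{w^{-1}\omega_s}$ has weight $w^{-1}\omega_s$) and that the ``mod $\ker\theta_w$'' equalities compose correctly, i.e. that products of elements equal mod $\ker\theta_w$ are again equal mod $\ker\theta_w$, which is immediate since $\ker\theta_w$ is a two-sided ideal. No genuinely hard idea is needed here beyond the ones already assembled; the statement is essentially a compatibility check that the $x_r^\pm$ obey the ``mixed'' relations of a quantized enveloping algebra, and for $r\ne s$ this mixed relation is simply commutativity.
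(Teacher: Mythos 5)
Your reduction of $[x_r^+,x_s^-]=0$ to a commutation, modulo $\ker\theta_w$, of the four matrix coefficients $A_t=U(h_{\omega_t},h_{w^{-1}\omega_t})$, $B_t=U(F_th_{\omega_t},h_{w^{-1}\omega_t})$ is fine as far as it goes, but the core step is not covered by the tools you invoke. Corollary~\ref{ccommut} only lets you slide the distinguished elements $U(h_\lambda,h_{w^{-1}\lambda})$ (first leg a highest weight vector) past other coefficients; it says nothing about commuting $B_r$ past $B_s^*$, and after all the $A$- and $k$-moves this is exactly what remains to be proved. If you try to supply it by applying Lemma~\ref{LemSwitch} directly to $B_rB_s^*$ (writing $B_s^*$ via \eqref{estar}), then the corrections of $\RR_{21}$ on the first legs do vanish for $r\ne s$ (this is the part of your intuition that is correct: the $t$-th term needs both $E_tF_rh_{\omega_r}\ne0$ and $F_t\overline{F_sh_{\omega_s}}\ne0$, forcing $t=r$ and $t=s$), but the corrections of $\RR^{-1}$ on the second legs $\bar h_{w^{-1}\omega_s}\otimes h_{w^{-1}\omega_r}$ do \emph{not} vanish whenever $w^{-1}\omega_r$, $w^{-1}\omega_s$ are not lowest weights, and they are not killed by Proposition~\ref{psoibelman}(1), whose hypothesis requires the first leg to be $h_\lambda$, whereas here the first legs are $F_rh_{\omega_r}$ and $\overline{F_sh_{\omega_s}}$. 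Handling precisely such leftover terms is what occupies the bulk of the paper's analysis in the case $r=s$ (it is where $\varepsilon_r$ comes from, see \eqref{eeps}--\eqref{eeps1}); declaring that for $r\ne s$ ``Corollary~\ref{ccommut} applies cleanly on the nose'' skips the only nontrivial point, so as written the argument has a genuine gap.

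The paper's own proof avoids all of this by a short equivariance argument you should compare with: since $F_sh_{\omega_r}=0$ for $s\ne r$, one has $E_s(\bar h_{\omega_r}\otimes h_{\omega_r})=0$ and hence $k_{-4\omega_r}\lhd F_s=0$ by \eqref{ek}; since $E_r$ and $F_s$ commute in $U_q(\g)$ for $r\ne s$, this gives $(k_{-4\omega_r}\lhd E_r)\lhd F_s=k_{-4\omega_r}\lhd(F_sE_r)=0$, and expanding the left-hand side with Lemma~\ref{LemCome} and the definition of $x_r^+$ (using the weight relations with the $k$'s) yields $[x_r^+,x_s^-]=0$ directly, with no $R$-matrix bookkeeping at all. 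If you want to salvage your route, you would have to prove a genuine extension of Corollary~\ref{ccommut} (or of Proposition~\ref{psoibelman}(1)) applicable to the $B$-type coefficients, which amounts to redoing work the adjoint-action argument renders unnecessary.
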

\begin{proof} For $r\neq s$, since $E_s(\bar h_{\omega_r}\otimes h_{\omega_r})=0$, we have $k_{-4\omega_r}\lhd F_s=0$. Hence $$(k_{-4\omega_r}\lhd E_r)\lhd F_s = k_{-4\omega_r}\lhd (F_sE_r) = 0.$$ Writing out the left hand side by means of the definition of $x_r^+$ and Lemma \ref{LemCome}, we arrive at the commutation of $x_r^+$ with $x_s^-$.
\end{proof}

The case $r=s$ is more complicated. We start with the following.

\begin{Lem}\label{LemElCent} The element \begin{equation}\label{EqElCent} k_{\alpha_r}^{-2} \lbrack x_r^+,x_r^- \rbrack  + (q_r-q_r^{-1})^{-1}k_{\alpha_r}^{-4}\end{equation} is central in $\Pol(\G_q/\K_{\SSS,q})_{\ext}$.\end{Lem}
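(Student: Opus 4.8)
The plan is to show that the element \eqref{EqElCent} commutes with a generating set of $\Pol(\G_q/\K_{\SSS,q})_{\ext}$, namely with all $k_\mu$, $\mu\in P$, and with all $U(\xi,\eta)$ where $\xi$ is a weight vector and $\eta$ a $U_q(\kk_\SSS)$-invariant vector in a type $I$ module (these generate $\Pol(\G_q/\K_{\SSS,q})$ by Proposition~\ref{pgen}, and together with the $k_\mu$ they generate the extension). Commutation with the $k_\mu$ is immediate once one checks the element \eqref{EqElCent} has weight zero: $x_r^+$ has weight $\alpha_r$, $x_r^-$ has weight $-\alpha_r$, so $k_{\alpha_r}^{-2}[x_r^+,x_r^-]$ has weight zero, and $k_{\alpha_r}^{-4}$ has weight zero. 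So the work is commutation with the $a=U(\xi,\eta)$.

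First I would record how $a=U(\xi,\eta)$ interacts with $x_r^\pm$. By Lemma~\ref{LemCome}, $a\lhd E_r = -q_r x_r^+ a k_{\alpha_r} + k_{\alpha_r} a x_r^+$ and $a\lhd F_r = -q_r^{-1} x_r^- a k_{\alpha_r} + k_{\alpha_r} a x_r^-$. Since $\eta$ is $U_q(\kk_\SSS)$-invariant, and $E_r,F_s$ with $\alpha_s\in\SSS$ act trivially on $\eta$, the behavior of $a\lhd E_r$, $a\lhd F_r$, $a\lhd(E_rF_r)$, $a\lhd(F_rE_r)$ is controlled: in particular $a\lhd[E_r,F_r] = \delta$-type expression coming from $a\lhd\frac{L_{\alpha_r}^2-L_{\alpha_r}^{-2}}{q_r-q_r^{-1}}$. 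Here is the key point: the adjoint action of the Cartan element $\frac{L_{\alpha_r}^2-L_{\alpha_r}^{-2}}{q_r-q_r^{-1}}$ on $a$ of weight $\omega$ gives $\frac{q^{(\alpha_r,\omega)}-q^{-(\alpha_r,\omega)}}{q_r-q_r^{-1}}\,a$, which is NOT central-looking but IS a scalar multiple of $a$ depending only on the weight. Then I would expand $a\lhd(E_rF_r) - a\lhd(F_rE_r)$ using $(bc)\lhd x = (b\lhd x_{(1)})(c\lhd x_{(2)})$ and the coproduct formulas for $E_r,F_r$, substituting the Lemma~\ref{LemCome} expressions. This yields an identity in $\Pol(\G_q/\K_{\SSS,q})_{\ext}$ of the form: (quadratic expression in $x_r^\pm$, $a$, and $k$'s) $=$ (scalar depending on $\wt(a)$)$\cdot$(product of $a$ with $k$'s). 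Rearranging, and using that $a$ has a definite weight so it commutes with the $k_\mu$ up to known scalars, this should be exactly the assertion that $\bigl(k_{\alpha_r}^{-2}[x_r^+,x_r^-] + (q_r-q_r^{-1})^{-1}k_{\alpha_r}^{-4}\bigr)$ commutes with $a$ — the weight-dependent scalars cancel precisely because of the $+(q_r-q_r^{-1})^{-1}k_{\alpha_r}^{-4}$ correction term, whose presence is forced by exactly this computation.

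The main obstacle I anticipate is bookkeeping the powers of $q$ and of $k_{\alpha_r}$ correctly when commuting $a$ past the various $k$'s and past $x_r^\pm$ (one must use that $a$ has weight $\omega$, hence $k_\mu a = q^{(\mu,\omega)/2} a k_\mu$, and one must track where the factors $k_{\alpha_r}$ from Lemma~\ref{LemCome} sit relative to $a$). The conceptual content is clean — it is the statement that $[E_r,F_r]$ acts on the coideal by a scalar fixed by the weight — but turning it into the precise centrality of \eqref{EqElCent} requires carefully isolating, in the expansion of $a\lhd(E_rF_r - F_rE_r)$, the terms $x_r^+ a x_r^-$ and $x_r^- a x_r^+$ versus $x_r^+ x_r^- a$ and $a x_r^+ x_r^-$, and seeing that the cross terms reorganize into a commutator of the claimed element with $a$. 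An alternative, possibly cleaner, route is to use Remark~\ref{rinvvect} together with the fact that $[E_r,F_r]$ in $U_q(\g)$ is a Cartan element, so that $p_\SSS\rhd\bigl((U(h_\lambda,h_{w_0\lambda})^*U(h_\lambda,h_{w_0\lambda}))\lhd[E_r,F_r]\bigr)$ is computable directly from \eqref{ek}; but I expect the direct computation via Lemma~\ref{LemCome} to be the most self-contained, and I would present that.
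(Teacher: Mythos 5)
Your proposal is correct and is essentially the paper's own argument: one applies the identity $(x\lhd E_r)\lhd F_r-(x\lhd F_r)\lhd E_r=x\lhd\bigl(\tfrac{L_{\alpha_r}^2-L_{\alpha_r}^{-2}}{q_r-q_r^{-1}}\bigr)$ to elements of $\Pol(\G_q/\K_{\SSS,q})$, expands it via Lemma~\ref{LemCome} together with the $k_\omega$--$x_r^\pm$ commutation relations, and notes that \eqref{EqElCent} has weight zero, hence commutes with all $k_\omega$. (Only a cosmetic remark: the expansion of $a\lhd(E_rF_r)$ needs just the iteration $a\lhd(xy)=(a\lhd x)\lhd y$, not the module-algebra rule $(bc)\lhd x=(b\lhd x_{(1)})(c\lhd x_{(2)})$, and no reduction to matrix coefficients $U(\xi,\eta)$ is required.)
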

\begin{proof} Take $x \in \Pol(\G_q/\K_{\SSS,q})$. Then \[(x\lhd E_r)\lhd F_r - (x\lhd F_r) \lhd E_r = x\lhd \left(\frac{L_{\alpha_r}^2-L_{\alpha_r}^{-2}}{q_r-q_r^{-1}}\right).\] Writing this out using Lemma~\ref{LemCome} and the known commutations between the elements $k_{\omega}$ and $x_r^+$ and $x_r^-$, we obtain that \eqref{EqElCent} commutes with all $x\in \Pol(\G_q/\K_{\SSS,q})$. As \eqref{EqElCent} clearly commutes with all $k_{\omega}$, we have in fact that \eqref{EqElCent} is central in $\Pol(\G_q/\K_{\SSS,q})_{\ext}$.
\end{proof}

\begin{Cor} \label{cepsexistence}
There exist $\varepsilon_r\in \R$ such that \[\lbrack x_r^+,x_r^-\rbrack = \frac{\varepsilon_r k_{\alpha_r}^2 - k_{\alpha_r}^{-2}}{q_r-q_r^{-1}}.\]
\end{Cor}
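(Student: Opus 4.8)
The plan is to combine Lemma~\ref{LemElCent} with an evaluation of the central element \eqref{EqElCent} at a single convenient point of $V_\SSS$. By Lemma~\ref{LemElCent}, the element $z_r:=k_{\alpha_r}^{-2}[x_r^+,x_r^-]+(q_r-q_r^{-1})^{-1}k_{\alpha_r}^{-4}$ lies in the centre of $\Pol(\G_q/\K_{\SSS,q})_{\ext}$. Since $\theta_w$ is an irreducible (and faithful) representation of $\Pol(\G_q/\K_{\SSS,q})$ on $\Hsp_w$, and $\Pol(\G_q/\K_{\SSS,q})_{\ext}$ acts irreducibly on $V_\SSS\subseteq\Hsp_w$ (the operators $k_\lambda$ being diagonal in the standard basis, which consists of joint eigenvectors coming from $\theta_w$), any central element acts as a scalar; call it $c_r\in\C$. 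Then $[x_r^+,x_r^-]=(c_rk_{\alpha_r}^2-(q_r-q_r^{-1})^{-1}k_{\alpha_r}^{-2})$, and writing $\varepsilon_r:=(q_r-q_r^{-1})c_r$ gives the stated formula; it remains to see that $\varepsilon_r$ is real, which is immediate since $[x_r^+,x_r^-]^*=[x_r^+,x_r^-]$ (as $x_r^-=(x_r^+)^*$) and $k_{\alpha_r}^{\pm2}$ are self-adjoint, so $c_r\in\R$.

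To pin down the value, I would evaluate $z_r$ on the distinguished vector $e_0^{\otimes l(w)}\in V_\SSS$. The key input is Lemma~\ref{LemCome}: applying the identity $(k_{-4\omega_r}a-q^{-2(\omega_r,\wt a)}ak_{-4\omega_r})\lhd E_r=0$-type relations, together with the explicit formula \eqref{explus} for $x_r^+$ in terms of $U(h_{\omega_r},h_{w^{-1}\omega_r})^*U(F_rh_{\omega_r},h_{w^{-1}\omega_r})$ and $k_{4\omega_r-\alpha_r}$, one computes the action of $x_r^+$ and $x_r^-$ on $e_0^{\otimes l(w)}$ using Proposition~\ref{psoibelman}(2): the operators $U(h_\lambda,h_{w^{-1}\lambda})$ are diagonal with $e_0^{\otimes l(w)}$ an eigenvector of modulus-one eigenvalue. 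Whether $x_r^+e_0^{\otimes l(w)}$ is zero or not will depend on whether $s_r\in W_\SSS$, i.e.\ on whether $h_{w^{-1}\omega_r}$ is already the lowest weight vector — this is exactly the dichotomy recorded by $\varepsilon_r=1$ if $\bar\alpha_r\in\SSS$ and $\varepsilon_r=0$ otherwise. In the case $\bar\alpha_r\notin\SSS$ one expects $F_rh_{\omega_r}$ to contribute a vector whose matrix element survives in $\theta_w$ and produces, after the normalization by $k_{4\omega_r-\alpha_r}$, a nonzero value forcing $c_r=0$; in the case $\bar\alpha_r\in\SSS$ the vector $h_{w^{-1}\omega_r}$ is $W_\SSS$-fixed, $v_{\omega_r}=\bar h_{w^{-1}\omega_r}\otimes h_{w^{-1}\omega_r}$, and a direct computation with the $\SU_{q_r}(2)$-relations ($c=-qb^*$, $d=a^*$, and the action $ae_n=(1-q_r^{2n})^{1/2}e_{n-1}$, $be_n=q_r^ne_n$) yields $c_r=(q_r-q_r^{-1})^{-1}$, hence $\varepsilon_r=1$.

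The main obstacle is the bookkeeping in this last evaluation: one must carefully track the several $q$-power scalar factors appearing in \eqref{ek}, in the passage from $k_{-4\omega_r}\lhd E_r$ to $x_r^+$, and in \eqref{estar} relating $U(\xi,\eta)^*$ to $U(\bar\xi,\bar\eta)$, and then verify that the modulus-one eigenvalues from Proposition~\ref{psoibelman}(2) combine with the $k_{4\omega_r-\alpha_r}$-normalization to cancel exactly, leaving a clean scalar. A cleaner route, which I would try first, is to restrict attention to the rank-one subalgebra generated by $E_r,F_r,L_{\alpha_r}^{\pm1}$ and to use the homomorphism $\pi_r\colon\Pol(\G_q/\K_{\SSS,q})_{\ext}\to$ an extension of $\Pol(\SU_{q_r}(2))$ (resp.\ to the trivial target when $\bar\alpha_r\notin\SSS$), reducing the identity to the already-known $\SU_{q_r}(2)$ case where $[x^+,x^-]=(k^2-k^{-2})/(q_r-q_r^{-1})$ is essentially the defining relation. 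Either way, once $c_r$ is identified as a scalar by centrality plus irreducibility, producing the constant is a finite computation and reality of $\varepsilon_r$ is automatic from self-adjointness.
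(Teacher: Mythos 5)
Your reduction of the corollary to a Schur-type argument is the right idea, but the one step that carries all the weight --- ``any central element of $\Pol(\G_q/\K_{\SSS,q})_{\ext}$ acts as a scalar on $V_\SSS$'' --- is exactly what you have not justified, and the justification you offer (that the $k_\lambda$ are diagonal on the standard basis) does not give it. What is known, and what the paper cites from \cite{SD}, is that $\theta_w$ is an irreducible $^*$-representation of $\Pol(\G_q/\K_{\SSS,q})$ by \emph{bounded} operators on the Hilbert space $\Hsp_w$; this controls the commutant inside $B(\Hsp_w)$ only. Your element $z_r=k_{\alpha_r}^{-2}[x_r^+,x_r^-]+(q_r-q_r^{-1})^{-1}k_{\alpha_r}^{-4}$ is an a priori unbounded operator defined only on the algebraic domain $V_\SSS$ and is not an element of $\Pol(\G_q/\K_{\SSS,q})$, so irreducibility of $\theta_w$ cannot be applied to it directly; algebraic irreducibility of $V_\SSS$ under the extended algebra (or triviality of its commutant in $\End(V_\SSS)$) is not available at this stage --- the identification of $V_\SSS$ with an irreducible highest weight $U_q(\g;\SSS)$-module is a consequence of the theorem being proved, so invoking it would be circular. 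Note also that diagonality of the $k_\lambda$ does not even show that $z_r$ is diagonal, since the joint eigenspaces of the $k_\lambda$ need not be one-dimensional.

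The paper closes exactly this gap with a small but essential device: it multiplies the central element \eqref{EqElCent} by $k_{-8\omega_r+4\alpha_r}$, observes that both $z=k_{-8\omega_r+4\alpha_r}\bigl(k_{\alpha_r}^{-2}[x_r^+,x_r^-]+(q_r-q_r^{-1})^{-1}k_{\alpha_r}^{-4}\bigr)$ and $k_{-8\omega_r+4\alpha_r}$ lie in $\Pol(\G_q/\K_{\SSS,q})$ (as $2\omega_r-\alpha_r\in P^+$), and that centrality becomes the twisted relation $zxk_{-8\omega_r+4\alpha_r}=k_{-8\omega_r+4\alpha_r}xz$ for all $x\in\Pol(\G_q/\K_{\SSS,q})$; the cited irreducibility of $\theta_w$ then forces $z$ to be a scalar multiple of $k_{-8\omega_r+4\alpha_r}$, which is precisely the corollary, and reality of $\varepsilon_r$ follows from self-adjointness of $[x_r^+,x_r^-]$ as you say. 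Your argument needs this (or an equivalent reduction to bounded operators in $\Pol(\G_q/\K_{\SSS,q})$) to be complete. Separately, your last two paragraphs concern the actual value $\varepsilon_r\in\{0,1\}$, which is not part of this corollary but of the subsequent lemma; as written they are heuristic (``one expects''), and the proposed shortcut through $\pi_r$ is doubtful, since $\pi_r$ is defined on $\Pol(\G_q)$ and there is no evident homomorphism from $\Pol(\G_q/\K_{\SSS,q})_{\ext}$ to an extension of $\Pol(\SU_{q_r}(2))$ carrying $x_r^\pm$ and $k_\lambda$ to the standard generators; the paper instead computes $\varepsilon_r$ via the $R$-matrix identity \eqref{eeps1} and a reduction to the case $\SSS=\{\bar\alpha_r\}$.
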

\begin{proof} Write \[z = k_{-8\omega_r+4\alpha_r}(k_{\alpha_r}^{-2} \lbrack x_r^+,x_r^- \rbrack  + (q_r-q_r^{-1})^{-1}k_{\alpha_r}^{-4}).\] Then $z\in \Pol(\G_q/\K_{\SSS,q})$, and by Lemma \ref{LemElCent}, we have \[zxk_{-8\omega_r+4\alpha_r}  = k_{-8\omega_r+4\alpha_r}xz\] for all $x\in \Pol(\G_q/\K_{\SSS,q})$. Note also that $k_{-8\omega_r+4\alpha_r}\in \Pol(\G_q/\K_{\SSS,q})$. As $\theta_w$ is an irreducible representation of $\Pol(\G_q/\K_{\SSS,q})$, we deduce that $z$ is a scalar multiple of $k_{-8\omega_r+4\alpha_r}$. This implies that there exist $\varepsilon_r\in \C$ as in the statement of the corollary. As the left hand side is clearly self-adjoint, it follows that $\varepsilon_r\in \R$.
\end{proof}

It remains to show that $\varepsilon_r = 1$ if $\bar\alpha_r\in \SSS$ and $\varepsilon_r=0$ if $\bar\alpha_r\notin \SSS$.

Before turning to the proof, let us rewrite Corollary~\ref{cepsexistence}.
By~\eqref{explus} we have
\[[x_r^+,x_r^-]=q_r(q_r^{-1}-q_r)^{-2}(A_r - B_r),\] where
\begin{eqnarray*}
A_r &=& \theta_w(U(h_{\omega_r},h_{w^{-1}\omega_r})^*U(F_rh_{\omega_r},h_{w^{-1}\omega_r}))k_{8\omega_r-2\alpha_r}\\ && \qquad \times
\theta_w(U(F_rh_{\omega_r},h_{w^{-1}\omega_r})^*U(h_{\omega_r},h_{w^{-1}\omega_r})),\\
B_r &=& k_{4\omega_r-\alpha_r}\theta_w(U(F_rh_{\omega_r},h_{w^{-1}\omega_r})^*U(h_{\omega_r},h_{w^{-1}\omega_r})\\ && \qquad \times U(h_{\omega_r},h_{w^{-1}\omega_r})^*
U(F_rh_{\omega_r},h_{w^{-1}\omega_r}))k_{4\omega_r-\alpha_r}.
\end{eqnarray*}
By Corollary~\ref{ccommut}, for any $m\ge0$, the operator $\theta_w(U(F_r^mh_\lambda,h_{w^{-1}\lambda}))$ is of weight $m\alpha_r$. Furthermore, by the same corollary we know that the operators  $\theta_w(U(h_\lambda,h_{w^{-1}\lambda}))$ and $\theta_w(U(h_\lambda,h_{w^{-1}\lambda})^*)$ for $\lambda\in P^+$ behave as $k_{-2\lambda}$ with respect to commutation relations with $\theta_w(U(F_r^mh_\lambda,h_{w^{-1}\lambda}))$ (in fact, they even coincide with $k_{-2\lambda}$ up to phase factors by the proof of Proposition~\ref{psoibelman}(2)).
 Using this we get
\begin{multline*}
[x_r^+,x_r^-]=q_r^{-1}(q_r^{-1}-q_r)^{-2}\big[\theta_w(U(F_rh_{\omega_r},h_{w^{-1}\omega_r})
U(F_rh_{\omega_r},h_{w^{-1}\omega_r})^*)\\
-\theta_w(U(F_rh_{\omega_r},h_{w^{-1}\omega_r})^*
U(F_rh_{\omega_r},h_{w^{-1}\omega_r}))\big]k_{4\omega_r-2\alpha_r}.
\end{multline*}
By \eqref{estar} we also have
$$
U(F_rh_{\omega_r},h_{w^{-1}\omega_r})^*=q_rq^{-(\rho,\omega_r-w^{-1}\omega_r)}
U(\overline{F_rh_{\omega_r}},\bar h_{w^{-1}\omega_r}).
$$
Thus Corollary~\ref{cepsexistence} reads as
\begin{multline} \label{eeps}
q^{-(\rho,\omega_r-w^{-1}\omega_r)}(q_r-q_r^{-1})^{-1}\theta_w\big[ U(F_rh_{\omega_r}\otimes\overline{F_rh_{\omega_r}},h_{w^{-1}\omega_r}\otimes \bar h_{w^{-1}\omega_r})\\
-U(\overline{F_rh_{\omega_r}}\otimes F_rh_{\omega_r},\bar h_{w^{-1}\omega_r}\otimes h_{w^{-1}\omega_r})\big]
=\varepsilon_r k_{-4\omega_r+4\alpha_r}-k_{-4\omega_r}.
\end{multline}

We now apply Lemma~\ref{LemSwitch}(1) to the first summand on the left hand side. Using properties \eqref{ermatrix} of the $R$-matrix and the identities
$$
E_rF_rh_{\omega_r}=h_{\omega_r},\ \ F_r\overline{F_rh_{\omega_r}}=-\overline{E_rF_rh_{\omega_r}}=-\bar h_{\omega_r},
$$
we get
$$
\RR_{21}(\overline{F_rh_{\omega_r}}\otimes F_rh_{\omega_r})=q^{-(\omega_r,\omega_r)}\overline{F_rh_{\omega_r}}\otimes F_rh_{\omega_r}-q^{-(\omega_r,\omega_r)}(q_r-q_r^{-1})\bar h_{\omega_r}\otimes h_{\omega_r}
$$
and
$$
\RR^{-1}(\bar h_{w^{-1}\omega_r}\otimes h_{w^{-1}\omega_r})=q^{(\omega_r,\omega_r)}\tilde\RR^{-1}(\bar h_{w^{-1}\omega_r}\otimes h_{w^{-1}\omega_r}).
$$
Therefore \eqref{eeps} becomes
\begin{multline*}
q^{-(\rho,\omega_r-w^{-1}\omega_r)}(q_r-q_r^{-1})^{-1}\theta_w(
U(\overline{F_rh_{\omega_r}}\otimes F_rh_{\omega_r},(\tilde\RR^{-1}-1)(\bar h_{w^{-1}\omega_r}\otimes h_{w^{-1}\omega_r})))\\
-q^{-(\rho,\omega_r-w^{-1}\omega_r)}\theta_w(
U(\bar h_{\omega_r}\otimes h_{\omega_r},\tilde\RR^{-1}(\bar h_{w^{-1}\omega_r}\otimes h_{w^{-1}\omega_r})))
=\varepsilon_r k_{-4\omega_r+4\alpha_r}-k_{-4\omega_r}.
\end{multline*}
In view of Proposition~\ref{psoibelman}(1) the second term on the left hand side is exactly $-k_{-4\omega_r}$. Thus we finally get that
\begin{multline}\label{eeps1}
q^{-(\rho,\omega_r-w^{-1}\omega_r)}(q_r-q_r^{-1})^{-1}\theta_w(
U(\overline{F_rh_{\omega_r}}\otimes F_rh_{\omega_r},(\tilde\RR^{-1}-1)(\bar h_{w^{-1}\omega_r}\otimes h_{w^{-1}\omega_r})))\\ =\varepsilon_r k_{-4\omega_r+4\alpha_r}.
\end{multline}

\begin{Lem}
With $\varepsilon_r$ as in Corollary~\ref{cepsexistence}, we have $\varepsilon_r = 1$ if $\bar\alpha_r\in \SSS$ and $\varepsilon_r=0$ if $\bar\alpha_r\notin \SSS$.
\end{Lem}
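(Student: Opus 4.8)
The plan is to evaluate both sides of equation \eqref{eeps1} by examining the action of the operator $\tilde\RR^{-1}-1$ on the vector $\bar h_{w^{-1}\omega_r}\otimes h_{w^{-1}\omega_r}$. By the description of $\tilde\RR$ in \eqref{ermatrix}, the operator $\tilde\RR^{-1}-1$ is a sum $-\sum_{k}q_k^{-1}(q_k-q_k^{-1})L_{\alpha_k}E_k\otimes L_{-\alpha_k}F_k$ plus higher-order terms that shift weights by $\alpha\in Q^+\setminus(\Phi^+\cup\{0\})$ in the first leg. Applied to $\bar h_{w^{-1}\omega_r}\otimes h_{w^{-1}\omega_r}$, the leading term gives $-q_k^{-1}(q_k-q_k^{-1})(L_{\alpha_k}E_k\bar h_{w^{-1}\omega_r})\otimes(L_{-\alpha_k}F_kh_{w^{-1}\omega_r})$, and the higher terms land in $\overline{V_{\omega_r}(w^{-1}\omega_r+\alpha)}\otimes V_{\omega_r}(w^{-1}\omega_r-\alpha)$ for $\alpha\in Q^+\setminus(\Phi^+\cup\{0\})$. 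The first step is to argue that when paired against $U(\overline{F_rh_{\omega_r}}\otimes F_rh_{\omega_r},-)$ and composed with $\theta_w$, the higher-order contributions vanish: the second leg of such a term, $V_{\omega_r}(w^{-1}\omega_r-\alpha)$, is orthogonal to $U_q(\bb^+)h_{w^{-1}\omega_r}$ (since $\omega_r$ is a fundamental weight, $h_{w^{-1}\omega_r}$ is either highest or, after a single reflection, generates only a very small $U_q(\bb^+)$-submodule), so Proposition~\ref{psoibelman}(1) forces the matrix element into $\ker\theta_w$. Only the $k=r$ leading term survives, because $E_k\bar h_{w^{-1}\omega_r}$ and $F_kh_{w^{-1}\omega_r}$ are simultaneously nonzero only when $k=r$ — and this in turn is governed by whether $w^{-1}\omega_r$ is the lowest weight of $V_{\omega_r}$ (equivalently, fixed by $W_\SSS$), i.e. exactly by the condition $\bar\alpha_r\in\SSS$ versus $\bar\alpha_r\notin\SSS$.

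The second step is the case distinction. If $\bar\alpha_r\notin\SSS$, then by the discussion preceding Proposition~\ref{pgen}, $w^{-1}\omega_r$ is fixed by $W_\SSS$ and hence $h_{w^{-1}\omega_r}=h_{w_0\omega_r}$ is the lowest weight vector of $V_{\omega_r}$, so $F_kh_{w^{-1}\omega_r}=0$ for all $k$; thus $(\tilde\RR^{-1}-1)(\bar h_{w^{-1}\omega_r}\otimes h_{w^{-1}\omega_r})=0$ modulo vectors whose matrix elements lie in $\ker\theta_w$, and the left-hand side of \eqref{eeps1} vanishes. Since $k_{-4\omega_r+4\alpha_r}$ is an invertible operator on $V_\SSS$, we conclude $\varepsilon_r=0$. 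If $\bar\alpha_r\in\SSS$, then $w^{-1}\omega_r$ is not fixed by $s_r$ — indeed $\langle w^{-1}\omega_r,\alpha_r^\vee\rangle$ should be computed and shown to equal $1$ — so that $F_rh_{w^{-1}\omega_r}\ne0$ and in fact $h_{w^{-1}\omega_r-\alpha_r}$ is (up to a scalar) $F_rh_{w^{-1}\omega_r}$, while $E_k\bar h_{w^{-1}\omega_r}=0$ unless $k=r$ by the same weight considerations on the barred module. The surviving term is then $-q_r^{-1}(q_r-q_r^{-1})$ times $(L_{\alpha_r}E_r\bar h_{w^{-1}\omega_r})\otimes(L_{-\alpha_r}F_rh_{w^{-1}\omega_r})$, and plugging this into \eqref{eeps1} reduces the claim $\varepsilon_r=1$ to an explicit identification of the resulting matrix element with $k_{-4\omega_r+4\alpha_r}=\theta_w(U(h_{\omega_r-\alpha_r},h_{w^{-1}(\omega_r-\alpha_r)})^*U(h_{\omega_r-\alpha_r},h_{w^{-1}(\omega_r-\alpha_r)}))$ up to the precise power of $q$, which is bookkeeping using \eqref{estar}, \eqref{ek}, and the phase-factor computations from the proof of Proposition~\ref{psoibelman}(2).

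The main obstacle I anticipate is the weight combinatorics needed to justify that only the $k=r$ term contributes and to pin down $\langle w^{-1}\omega_r,\alpha_r^\vee\rangle=1$ in the case $\bar\alpha_r\in\SSS$. Concretely, one needs that $E_k\bar h_{w^{-1}\omega_r}\ne0$ forces $k=r$: since $\bar h_{w^{-1}\omega_r}$ has weight $-w^{-1}\omega_r$ in $\bar V_{\omega_r}\cong V_{\bar\omega_r}$, and $\omega_r$ is a fundamental weight, the $U_q(\bb^+)$-orbit structure is rigid, but this still requires the observation that $w$, being the minimal coset representative of $w_0W_\SSS$, sends $\omega_r$ to a weight whose only "raising direction" inside the relevant module is along $\alpha_r$ precisely when $\alpha_r$ "belongs to $\bar\SSS$" in the appropriate sense. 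Once this structural fact is in hand, the remaining verification of the exact scalar in the $\bar\alpha_r\in\SSS$ case is a direct, if slightly tedious, application of the already-established phase identities, and I would present it compactly rather than in full detail.
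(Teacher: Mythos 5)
Your argument for the case $\bar\alpha_r\notin\SSS$ is correct and coincides with the paper's. The case $\bar\alpha_r\in\SSS$, however, contains genuine gaps. First, the structural claim on which your evaluation of \eqref{eeps1} rests --- that only the $k=r$ leading term of $\tilde\RR^{-1}-1$ survives and that $(w^{-1}\omega_r,\alpha_r^\vee)=1$ --- is false. Since $w^{-1}\omega_r=-w_{\SSS,0}\omega_{\bar r}$ is an extreme weight, the unique simple root $\alpha_k$ with $F_kh_{w^{-1}\omega_r}\ne0$ (equivalently $E_k\bar h_{w^{-1}\omega_r}\ne0$) is $\alpha_k=-w_{\SSS,0}\bar\alpha_r=w^{-1}\alpha_r$, which in general is not $\alpha_r$. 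For example, for $\g=\ssl(3,\C)$, $\SSS=\{\alpha_2\}$, $r=1$ (so $\bar\alpha_1=\alpha_2\in\SSS$) one finds $w^{-1}\omega_1=-\omega_2+\alpha_2$, hence $(w^{-1}\omega_1,\alpha_1^\vee)=-1$ and $F_1h_{w^{-1}\omega_1}=0$, the surviving index being $k=2$; even in the paper's special case $\SSS=\{\bar\alpha_r\}$ the surviving index is $\bar r$, not $r$. Second, your use of Proposition~\ref{psoibelman}(1) to discard the higher-order terms of $\tilde\RR^{-1}$ is not justified: writing $U(\overline{F_rh_{\omega_r}}\otimes F_rh_{\omega_r},\zeta_1\otimes\zeta_2)=U(\overline{F_rh_{\omega_r}},\zeta_1)U(F_rh_{\omega_r},\zeta_2)$, the bra of the second factor is $F_rh_{\omega_r}$, not a highest weight vector, so the proposition gives no vanishing; and unlike the rank-one situation, where $F_{\bar r}h_{w^{-1}\omega_r}$ is already a lowest weight vector so that the tail of $\tilde\RR^{-1}$ dies identically, in general these terms are nonzero and would have to be controlled modulo $\ker\theta_w$ by some other argument.

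Third, even after isolating the correct term, identifying it with $\varepsilon_rk_{-4\omega_r+4\alpha_r}$ is not mere bookkeeping: your formula $k_{-4\omega_r+4\alpha_r}=\theta_w(U(h_{\omega_r-\alpha_r},h_{w^{-1}(\omega_r-\alpha_r)})^*U(h_{\omega_r-\alpha_r},h_{w^{-1}(\omega_r-\alpha_r)}))$ is meaningless because $\omega_r-\alpha_r$ is not dominant; $k_{-4\omega_r+4\alpha_r}$ is only defined through fourth roots and inverses of the elements $k_{-4\lambda}$, $\lambda\in P^+$. The paper never compares the two sides of \eqref{eeps1} as operators by a direct general computation. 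Instead, since proportionality is already known from Corollary~\ref{cepsexistence}, it compares them on one convenient vector: for $\SSS=\{\bar\alpha_r\}$ both sides have eigenvalue $1$ on $e_0^{\otimes(l(w_0)-1)}$ by Soibelman-type explicit computations, and for general $\SSS$ it bypasses \eqref{eeps1} altogether, noting that $x_r^+v_w=0$ for $v_w=e_0^{\otimes l(w)}$, so that $\varepsilon_r=1$ is equivalent to $x_r^-v_w=0$, which is then deduced from the case $\SSS=\{\bar\alpha_r\}$ via the length-additive factorization $s_rw_0=wu$ with $u=w_{\SSS,0}s_{\bar r}$ and a computation inside $\K_{\SSS,q}$. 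Some reduction of this kind (or a genuinely new computation of the relevant matrix element modulo $\ker\theta_w$) is missing from your proposal, so as written the case $\bar\alpha_r\in\SSS$ does not go through.
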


\bp
Define an involution on $I$ by $\alpha_{\bar r}=\bar\alpha_r$.

\smallskip

Assume $\bar \alpha_r\not\in\SSS$. In this case the equality $\varepsilon_r=0$ follows immediately from \eqref{eeps1}, since $\bar\omega_r=\omega_{\bar r}\in P(\SSS^c)$ and so $w^{-1}\omega_r$ is already the lowest weight of~$V_{\omega_r}$, whence $$(\tilde\RR^{-1}-1)(\bar h_{w^{-1}\omega_r}\otimes h_{w^{-1}\omega_r})=0.$$

\smallskip

Turning to the case $\bar\alpha_r\in\SSS$, assume first that $\SSS=\{\bar\alpha_r\}$. Then $w=w_0s_{\bar r}=s_rw_0$ and
$w^{-1}\omega_r=s_{\bar r}w_0\omega_r=-s_{\bar r}\omega_{\bar r}=-\omega_{\bar r}+\alpha_{\bar r}$. In particular, up to a phase factor, $h_{w^{-1}\omega_r}$ equals $E_{\bar r}h_{w_0\omega_r}$. It follows that
$$
(\tilde\RR^{-1}-1)(\bar h_{w^{-1}\omega_r}\otimes h_{w^{-1}\omega_r})
=(q_{\bar r}-q_{\bar r}^{-1})\bar h_{w_0\omega_r}\otimes h_{w_0\omega_r}.
$$
Then, since $F_rh_{\omega_r}$ coincides with $h_{s_r\omega_r}$ up to a phase factor, \eqref{eeps1} becomes
$$
\theta_{s_rw_0}(U(h_{s_r\omega_r},h_{w_0\omega_r})^*U(h_{s_r\omega_r},h_{w_0\omega_r})) =\varepsilon_r k_{-4\omega_r+4\alpha_r}.
$$
By results of \cite{Soi1} (we used similar arguments in the proof of Proposition~\ref{psoibelman}(2)), the operator on the left hand side can be explicitly computed. In particular, we know that $e_0^{\otimes (l(w_0)-1)}$ is an eigenvector of this operator with eigenvalue $1$. But the same is true for the operator $k_{-4\omega_r+4\alpha_r}$. Hence $\varepsilon_r=1$.

\smallskip

Consider now the case of an arbitrary $\SSS $ containing $\bar\alpha_r$. Denote by $v_w$ the vector $e_0^{\otimes l(w)}\in\Hsp_w$. Since $x_r^+$ is of weight $\alpha_r$, so that $k_{-\lambda} x_r^+=q^{-(\lambda,\alpha_r)/2}x_r^+k_{-\lambda}$, and $v_w$ is an eigenvector of $k_{-\lambda}$ with the largest eigenvalue $1$ for $\lambda\in P^+$, we have $x_r^+v_w=0$. This together with Corollary~\ref{cepsexistence} implies that the equality $\varepsilon_r=1$ is equivalent to $x_r^-v_w=0$, which by \eqref{explus} is, in turn, equivalent to
\begin{equation}\label{eeps3}
\theta_w(U(F_rh_{\omega_r},h_{w^{-1}\omega_r}))^*v_w=0.
\end{equation}
We want to show that this follows from the case $\SSS=\{\bar\alpha_r\}$ considered above, which implies that
\begin{equation} \label{eeps4}
\theta_{s_rw_0}(U(F_rh_{\omega_r},h_{s_{\bar r}w_0\omega_r}))^*v_{s_rw_0}=0.
\end{equation}

Recall that $w_{\SSS,0}$ denotes the longest element in $W_\SSS$. Put $u=w_{\SSS,0}s_{\bar r}$, so that $s_rw_0=wu$ and $l(s_rw_0)=l(w)+l(u)$. Choosing an orthonormal basis $\{\xi_j\}_j$ in $V_{\omega_r}$, we have
$$
\theta_{s_rw_0}(U(F_rh_{\omega_r},h_{s_{\bar r}w_0\omega_r}))^*v_{s_rw_0}
=\sum_j \theta_{w}(U(F_rh_{\omega_r},\xi_j))^*v_{w}\otimes\theta_u(U(\xi_j,h_{s_{\bar r}w_0\omega_r}))^*v_{u}.
$$
Therefore in order to deduce \eqref{eeps3} from \eqref{eeps4} it suffices to show that $$(\theta_u(U(h_{w^{-1}\omega_r},h_{s_{\bar r}w_0\omega_r}))^*v_u,v_u)\ne0$$ and $(\theta_u(U(\xi,h_{s_{\bar r}w_0\omega_r}))^*v_u,v_u)=0$ for $\xi$ orthogonal to $h_{w^{-1}\omega_r}$.
As we already discussed after Definition~\ref{dk}, the vector $h_{w^{-1}\omega_r}$ is a highest weight vector of the $U_q(\kk_\SSS)$-module $V=U_q(\kk_\SSS)h_{w_0\omega_r}$. Since we also have $h_{s_{\bar r}w_0\omega_r}\in V$ and $\theta_u$ is defined using the homomorphism $\Pol(\G_q)\to\Pol(\K_{\SSS,q})$, we thus see that the problem at hand is entirely about $\K_{\SSS,q}$. So without loss of generality we may assume that $\SSS=\Phi^+$ (of course, our original problem of computing $\varepsilon_r$ is trivial in this case).

Thus we have to show that $(\theta_{w_0s_{\bar{r}}}(U(h_{\omega_r},h_{s_{\bar r}w_0\omega_r}))^*v_{w_0s_{\bar{r}}},v_{w_0s_{\bar{r}}})\ne0$ and $$(\theta_{\omega_0s_{\bar{r}}}(U(\xi,h_{s_{\bar r}w_0\omega_r}))^*v_{\omega_0s_{\bar{r}}},v_{\omega_0s_{\bar{r}}})=0$$ for $\xi$ orthogonal to $h_{\omega_r}$. The first claim follows from Proposition~\ref{psoibelman}(2). For the second, consider a weight vector $\xi\in V_{\omega_r}$ with $\wt(\xi)\ne\omega_r$. Then by Corollary~\ref{ccommut} the operator $\theta_{\omega_0s_{\bar{r}}}(U(\xi,h_{s_{\bar r}w_0\omega_r}))^*$ is of weight $\wt(\xi)-\omega_r\ne0$ with respect to the operators $k_\lambda$ defined for $\SSS=\{\alpha_{\bar r}\}$. Hence $\theta_{\omega_0s_{\bar{r}}}(U(\xi,h_{s_{\bar r}w_0\omega_r}))^*$ maps any joint eigenvector of $k_\lambda$, $\lambda\in P$, into an orthogonal vector. This proves the second claim.
\end{proof}

This lemma finishes the proof of the existence of a unital homomorphism $$\Psi\colon U_q(\g;\SSS)\to\Pol(\G_q/\K_{\SSS,q})_{\ext}$$ such that $\Psi(L_\omega)=k_\omega$, $\Psi(E_r)=x_r^+$, $\Psi(F_r)=x_r^-$. Clearly, this homomorphism is $^*$-preserving.

Lemma~\ref{LemCome} and the relations in $U_q(\g;\SSS)$ imply that the right action of $U_q(\g)$ on the algebra $\Pol(\G_q/\K_{\SSS,q})$ extends to an action on $\Pol(\G_q/\K_{\SSS,q})_{\ext}$ defined by
$$
a\lhd L_\omega=k_{-\omega}ak_\omega,\ \ a\lhd E_r = -q_rx_r^+ak_{\alpha_r} + k_{\alpha_r}ax_r^+,\ \ a\lhd F_r = -q_r^{-1}x_r^-ak_{\alpha_r} + k_{\alpha_r}ax_r^-.
$$
In fact, this is basically the content of Lemmas~\ref{LemHom}--\ref{LemElCent}. With respect to this action the homomorphism $\Psi\colon U_q(\g;\SSS)\to\Pol(\G_q/\K_{\SSS,q})_{\ext}$ becomes $U_q(\g)$-equivariant.

By \cite[Proposition~3.2]{DeC1} the subalgebra  $U_q(\g;\SSS)_{\fin}\subseteq U_q(\g;\SSS)$ is generated by the elements $L_{-4\lambda}$, $\lambda\in P^+$, as a right $U_q(\g)$-module. Since by Proposition~\ref{pgen} the elements $k_{-4\lambda}$ generate $\Pol(\G_q/\K_{\SSS,q})$ as a right $U_q(\g)$-module, it follows that $\Psi(U_q(\g;\SSS)_{\fin})=\Pol(\G_q/\K_{\SSS,q})$. Since $\Pol(\G_q/\K_{\SSS,q})_{\ext}$ is generated by $\Pol(\G_q/\K_{\SSS,q})$ and the elements $k_\omega$, $\omega\in P$, we finally conclude that $\Psi\colon U_q(\g;\SSS)\to\Pol(\G_q/\K_{\SSS,q})_{\ext}$ is surjective. This completes the proof of Theorem~\ref{TheoHom}.


\begin{thebibliography}{00}

\bibitem{CP} V. Chari and A. Pressley, A guide to quantum groups, Cambridge University Press, Cambridge (1994).
\bibitem{DeC1}  K. De Commer, Representation Theory of Quantized Enveloping Algebras with Interpolating Real Structure, \emph{SIGMA} \textbf{9} (2013), 081 (20 pages).
\bibitem{DCL} C. De Concini and V. Lyubashenko, Quantum function algebra at roots of $1$, \emph{Adv. Math.} \textbf{108} (2) (1994), 205--262.
\bibitem{DCP} C. De Concini and C. Procesi, Quantum Schubert cells and representations at roots of $1$, in: Algebraic groups and Lie groups, 127--160, \emph{Austral. Math. Soc. Lect. Ser.}, \textbf{9}, Cambridge Univ. Press, Cambridge (1997).
\bibitem{Dri1} V. Drinfel'd, Quantum groups, \emph{J. Soviet Math.} \textbf{41} (1988), 18--49.
\bibitem{EEM1} B. Enriquez, P. Etingof and I. Marshall, Quantization of some Poisson-Lie dynamical r-matrices and Poisson homogeneous spaces, \emph{Contemp. Math.} \textbf{433} (2007), 135--176.
\bibitem{Gav1}  F. Gavarini, The quantum duality principle, \emph{Annales de l'Institut Fourier} \textbf{52} (3) (2002), 809--834.
\bibitem{Jo} A. Joseph, Quantum groups and their primitive ideals, \emph{Ergebnisse der Mathematik und ihrer Grenzgebiete} (3) [Results in Mathematics and Related Areas (3)], 29, Springer-Verlag, Berlin (1995).
\bibitem{JoL1} A. Joseph and G. Letzter, Local finiteness of the adjoint action for quantized enveloping algebras, \emph{J. Algebra} \textbf{153} (1992), 289--318.
\bibitem{KS1} A. Klimyk and K. Schm\"udgen, Quantum groups and their representations, \emph{Texts and Monographs in Physics}, Springer-Verlag, Berlin (1997).
\bibitem{KoS1} L.I. Korogodski and Y.S. Soibelman, Algebras of functions on Quantum Groups: Part I, \emph{Mathematical Surveys and Monographs} \textbf{56}, American Mathematical Society, Providence, RI (1998).
\bibitem{KOY1} A. Kuniba, M. Okado and Y. Yamada, A common structure in PBW bases of the nilpotent subalgebra of $U_q(\g)$ and quantized algebra of functions, \textit{SIGMA} \textbf{9} (2013), 049.
\bibitem{Lan1} N. P. Landsman, Strict quantization of coadjoint orbits, \emph{J. Math. Phys.} \textbf{39} (12) (1998), 6372--6383.
\bibitem{Mud1} A. Mudrov, Quantum conjugacy classes of simple matrix groups, \emph{Commun. Math. Phys.} \textbf{272} (2007), 635--660.
\bibitem{NeT1} S. Neshveyev and L. Tuset, Compact quantum groups and their representation categories, manuscript (2013), to be published in Cours Sp\'{e}cialis\'{e}s de la SMF; preliminary version available at \url{http://folk.uio.no/sergeyn/papers/CQGRC.pdf}.
\bibitem{Rie1} M. A. Rieffel, Matrix algebras converge to the sphere for quantum Gromov-Hausdorff distance, \emph{Mem. Amer. Math. Soc.} \textbf{168} (796) (2004), 67--91.
\bibitem{Soi1} Ya.S. Soibelman, Algebra of functions on a compact quantum group and its representations (Russian), \emph{Algebra i Analiz} \textbf{2} (1990), 190--212; translated in \emph{Leningrad Math. J.} \textbf{2} (1991), 161--178.
\bibitem{SD} J.V. Stokman and M.S. Dijkhuizen, Quantized flag manifolds and irreducible $*$-representations,
\emph{Comm. Math. Phys.} \textbf{203} (2) (1999), 297--324.
\bibitem{S} J.V. Stokman, The quantum orbit method for generalized flag manifolds, \emph{Math. Res. Lett.} \textbf{10} (4) (2003), 469--481.
\bibitem{Ya} M. Yakimov, Invariant prime ideals in quantizations of nilpotent Lie algebras, \emph{Proc. Lond. Math. Soc.} (3) \textbf{101} (2) (2010), 454--476.
\end{thebibliography}
\end{document}